\newtheorem{theorem}{Theorem}[section]
\newtheorem{lemma}[theorem]{Lemma}
\newtheorem{corollary}[theorem]{Corollary}
\newtheorem{remark}[theorem]{Remark}
\newtheorem{example}[theorem]{Example}
\newtheorem{definition}[theorem]{Definition}
\def\ord{\mathrm{ord}\,}
\def\a{\mathbf{a}}
\def\b{\mathbf{b}}
\def\per{\rho}
\def\lcm{\mathrm{lcm}}
\def\proj{\pi}
\def\Zpos{\mathbb{Z}_{>0}}
\begin{document}
\title[Period sets of linear recurrences over finite fields and related commutative rings]
{Period sets of linear recurrences over finite fields and related commutative rings} 
\author{Michael R. Bush}
\address{Department of Mathematics,
  Washington and Lee University, 204 W. Washington St., Lexington, VA 24450, USA.}
\email{bushm@wlu.edu}
\author{Danjoseph Quijada}
\address{Department of Mathematics, University of Southern California, 3620 S. Vermont Ave., KAP 104, Los Angeles, CA 90089-2532, USA.}
\email{dquijada@usc.edu}
\date{8 May 2018}
\thanks{This work began as a summer project while the second author was an undergraduate student at Washington and Lee University. It subsequently became part of an Honors thesis~\cite{Q} completed in 2015. The second author would like to thank the university for providing summer research scholar funding. The first author was also partially supported by an internal Lenfest grant.}

\subjclass[2010]{11B50 (11B37, 11B39, 94A55)}
\keywords{sequence, linear recurrence, period, characteristic polynomial, finite field, finite commutative ring, cyclic group algebra}

\begin{abstract}
After giving an overview of the existing theory regarding the periods of sequences defined by linear recurrences over finite fields, we give explicit descriptions of the sets of periods that arise if one considers all sequences over $\mathbb{F}_q$ generated by linear recurrences for a fixed choice of the degree $k$ in the range $1 \leq k \leq 4$. We also investigate the periods of sequences generated by linear recurrences over rings of the form $\mathbb{F}_{q_1} \oplus \ldots \oplus \mathbb{F}_{q_r}$. 
\end{abstract}

\maketitle

\section{Introduction}\label{section-introduction}
We begin by fixing some notation and recalling some definitions and basic results concerning linear recurrence sequences. Let $R$ be a commutative ring with unity $1 \neq 0$. Let $\mathbf{a} = (a_n)_{n=0}^\infty$ be a sequence with $a_n \in R$ for all $n \geq 0$. If there exist $c_i \in R$ for $0 \leq i \leq k-1$ such that the terms in the sequence satisfy the following equation
\begin{equation}\label{eqn-lin-rec}
  a_{n+k}  = \sum_{i=0}^{k-1} c_i a_{n+i} 
  \end{equation}
for all $n \geq 0$, then we say that $\mathbf{a}$ is a {\em linear recurrence sequence over $R$}. The equation is called a {\em linear recurrence} and the quantity $k$ is called the {\em degree} of the recurrence. Formally, the linear recurrence is specified by a tuple of coefficients $(c_0, \ldots, c_{k-1}) \in R^k$ and zero entries are allowed in any position including $c_0$, thus the degree is not required to be minimal in any sense. 
\begin{example}
The well-known Fibonacci sequence
\[ 0,1,1,2,3,5,8,13,\ldots \]
is a linear recurrence sequence over $\mathbb{Z}$ satisfying the degree~$2$ recurrence
\[ a_{n+2} = a_{n+1} + a_n \]
for all $n \geq 0$ with coefficients $c_0 = c_1 = 1$.
\end{example}

It is clear from the form of equation~(\ref{eqn-lin-rec}) that each term in the sequence depends on the previous $k$ terms. As a result, it is natural to consider  the tuple  $s_n = (a_n, \ldots, a_{n + k - 1}) \in R^k$ ($n \geq 0$), which we refer to as the {\em $n$-th state vector} of the sequence.  Any linear recurrence sequence of degree $k$ is completely determined by specifying the recurrence equation and initial state $s_0 = (a_0,\ldots,a_{k-1})$. 

If $R$ is a finite ring, then there are a finite number of possible states $|R^k| = |R|^k$ and an argument using the pigeonhole principle shows that the sequence must be ultimately periodic, ie. there exist positive $N,m \in \mathbb{Z}$ such that $a_{n+m} = a_n$ for all $n \geq N$. In this situation, we will say the sequence $\a$ is {\em $m$-periodic}. The smallest positive value of $m$ with this property is called the {\em period} of the sequence and it will be denoted by $\per(\mathbf{a})$. We note that some authors use the term {\em least period} for $\per(\a)$, but we will not. It is straightforward to verify using a Division Algorithm argument that for any positive integer $m$, the sequence $\mathbf{a}$ is $m$-periodic if and only if $\per(\mathbf{a})$ divides $m$. 

Important examples of finite commutative rings include the integers modulo~$n$ which we denote $\mathbb{Z}_n$ and also finite fields.  We use the notation $\mathbb{F}_q$ to denote the finite field of order $q$ where $q = p^e$ with $e \geq 1$ and  $p$  a prime integer. Recall that when $n = q = p$ these notions coincide and we have $\mathbb{Z}_p \cong \mathbb{F}_p$.  

\begin{example}\label{example-fib-mod-2}
Consider the Fibonacci recurrence $a_{n+2} = a_{n+1} + a_n$ over $\mathbb{F}_2  = \{0 , 1\}$. Starting from the initial state $s_0 = (a_0,a_1) = (0,1)$, we obtain the sequence 
\[ \underline{0, 1}, 1, \underline{0, 1}, 1, 0, 1, \ldots \]
and so clearly we have $\per(\a) = 3$ in this case. 
\end{example}

In general, if the coefficient $c_0 \in R$ is a unit, then one can invert the recurrence equation~(\ref{eqn-lin-rec}) and observe that any state $s_n$ uniquely determines the preceding term $a_{n-1}$ and hence the state $s_{n-1}$. In this situation, the periodicity begins right from the initial term. ie. $a_{n+m} = a_n$ for all $n \geq 0$. Throughout this paper we will restrict attention to sequences defined by recurrence equations satisfying this condition.

It is natural to ask about the set of all possible periods of linear recurrence sequences defined over a finite ring $R$ and how this depends on the choice of ring. One easy observation is that for a given degree $k$, the period is always bounded above by $|R|^k - 1$. This follows from the pigeonhole principle argument above. Since there are $|R|^k$ state vectors, any list of $|R|^k + 1$ consecutive states must include the repetition of at least two states. This gives a bound of $|R|^k$ on the period. One can decrease this slightly because the zero state vector $(0,\ldots,0)$ clearly generates the zero sequence of period $1$. Hence any sequence containing infinitely many non-zero terms cannot involve this state and so the maximum number of allowed states in a repeating cycle is reduced to $|R|^k - 1$.

In the case where $R = \mathbb{F}_q$, there is a well-developed theory describing the behavior of these periods. In particular, it can be shown (see Corollary~\ref{corollary-max-period-finite-field}) that for each $k \geq 1$, there exist linear recurrence sequences of degree $k$ with period equal to  the upper bound $q^k - 1$. In Section~\ref{section-lin-rec-ff}, we give an overview of some of this theory. In Section~3, we apply it to obtain our main result (Theorem~\ref{theorem-period-sets}) giving explicit descriptions of the sets of periods that can be achieved for small values of the degree $k$ over an arbitrary finite field. In Section~4, we investigate the periods of sequences defined over a slightly broader class of finite commutative rings.

Throughout we will assume that the reader is familiar with the basic facts about finite fields and polynomial rings defined over fields as can be found in many introductory books on abstract algebra. See for instance~\cite{G, J}.

\section{Linear recurrences over finite fields}\label{section-lin-rec-ff}

In this section, we review some material concerning orders of polynomials and then explain how this is connected with determining the periods of linear recurrence sequences defined over finite fields. We follow the treatment in~\cite[Chapters~3 and 8]{LN} and further discussion and additional results can be found there. See also~\cite[Section~6.2]{B} although we note that the latter uses slightly different terminology.

\begin{definition}
Let $f(x) \in \mathbb{F}_q[x]$ and suppose $f(x) = x^r g(x)$ with $r \geq 0$ and $\gcd(g(x),x) = 1$. Observe that $r$ and $g(x)$ are uniquely determined by $f(x)$. The {\em order of $f(x)$} (denoted $\ord(f(x))$) is defined to be the smallest integer $n > 0$ such that $g(x)$ divides $x^n - 1$. Equivalently, this quantity is the multiplicative order of $x + \langle g(x) \rangle$ in the group of units of the quotient ring $\mathbb{F}_q[x]/\langle g(x) \rangle$.
\end{definition}

\begin{remark}\label{remark-ord-irred}
The fact that such an $n$ exists follows since the quotient ring $\mathbb{F}_q[x]/\langle g(x) \rangle$ is finite and so has a finite multiplicative group of units. The condition $\gcd(g(x),x) = 1$ ensures that $\alpha = x + \langle g(x) \rangle$ is an element of the unit group.

If the polynomial  $f(x)$ (or $g(x)$) is irreducible in $\mathbb{F}_q[x]$, then we have an isomorphism $\mathbb{F}_q[x]/\langle g(x) \rangle \cong \mathbb{F}_{q^d}$ were $d = \deg g(x)$. In this situation, $\alpha \neq 0$ is a root of $g(x)$ in the larger field $\mathbb{F}_{q^d}$ and the order of the polynomial is simply the multiplicative order of $\alpha$ in $\mathbb{F}_{q^d}^\times = \mathbb{F}_{q^d} - \{ 0 \}$. It follows by Lagrange's theorem that $\ord(f(x))$ must divide $q^d - 1$ in this situation.
\end{remark}

The problem of computing $\ord(f(x))$ can be reduced to the irreducible case using the following two results. For proofs, see~\cite[Theorems~3.8, 3.9 and 3.11]{LN}.

\begin{theorem}\label{theorem-ord-irred-power}
Let $g(x)$ be irreducible over $\mathbb{F}_q$ with $g(0) \neq 0$ and $\ord(g(x)) = e$, and let $f(x) = g(x)^b$ with $b \in \Zpos$. Let $t$ be the smallest integer with $p^t \geq b$ where $p$ is the characteristic of $\mathbb{F}_q$. Then $\ord(f(x)) = e p^t$.
\end{theorem}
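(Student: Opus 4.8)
The plan is to reduce the computation to understanding the multiplicity with which the irreducible polynomial $g(x)$ divides $x^n - 1$ as $n$ ranges over the positive integers. Since $g(0) \neq 0$ we have $\gcd(g(x),x) = 1$, and hence also $\gcd(f(x),x) = 1$, so by definition $\ord(f(x))$ is simply the smallest $n > 0$ for which $f(x) = g(x)^b$ divides $x^n - 1$. Thus I must find the least $n$ such that the multiplicity of $g(x)$ as a factor of $x^n - 1$ is at least $b$.

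First I would record the basic divisibility property of the order. Writing $d = \deg g(x)$ and letting $\alpha \in \mathbb{F}_{q^d}^\times$ be a root of $g(x)$ as in Remark~\ref{remark-ord-irred}, one has $g(x) \mid x^m - 1$ if and only if $\alpha^m = 1$, that is, if and only if $e = \ord(\alpha)$ divides $m$. In particular, since $e$ divides $q^d - 1$ and $q^d - 1 \equiv -1 \pmod{p}$, the order $e$ is coprime to $p$; this fact will be used in the final minimization.

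The key step is to pin down the exact multiplicity. Given $n$, I would write $n = p^s n'$ with $p \nmid n'$, and use the Frobenius identity in characteristic $p$ to observe that $x^n - 1 = (x^{n'} - 1)^{p^s}$. The polynomial $x^{n'} - 1$ is separable, because $p \nmid n'$ forces its derivative $n' x^{n'-1}$ to be coprime to it, so $x^{n'} - 1$ is a product of \emph{distinct} irreducible factors. Consequently $g(x)$ divides $x^n - 1$ with multiplicity exactly $p^s$ whenever $g(x) \mid x^{n'} - 1$ (equivalently, whenever $e \mid n'$), and with multiplicity $0$ otherwise. This is the heart of the argument and I expect it to be the main obstacle, in the sense that the interplay of separability with the Frobenius identity is precisely what dictates how the exponent $b$ is absorbed by the characteristic.

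Finally I would minimize. The condition $g(x)^b \mid x^n - 1$ is now equivalent to requiring both $e \mid n'$ and $p^s \geq b$. To minimize $n = p^s n'$ it suffices to minimize each factor separately: since $e$ is coprime to $p$, the smallest admissible value of $n'$ is $e$ itself, and the smallest admissible value of $p^s$ is $p^t$ by the definition of $t$. As these two minima are attained simultaneously by taking $s = t$ and $n' = e$, the least valid $n$ is $e\,p^t$, which yields $\ord(f(x)) = e\,p^t$ as claimed.
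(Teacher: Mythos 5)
Your proof is correct. Note that the paper itself does not prove this statement at all --- it defers to Lidl--Niederreiter \cite[Theorem~3.9]{LN} --- so your argument fills a gap the paper deliberately leaves open, and it is essentially the standard one, though recast slightly: \cite{LN} sandwiches $c = \ord(f(x))$ via $e \mid c$ and $c \mid e p^t$ (the latter from $g(x)^b \mid (x^e - 1)^{p^t} = x^{e p^t} - 1$) and then rules out $c = e p^s$ with $s < t$ by a root-multiplicity count, whereas you compute, for \emph{every} $n = p^s n'$ with $p \nmid n'$, the exact multiplicity of $g(x)$ in $x^n - 1$ (namely $p^s$ if $e \mid n'$, and $0$ otherwise) and then minimize over the two independent coordinates $p^s$ and $n'$. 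Both routes rest on the same two pillars --- the Frobenius identity $x^{p^s n'} - 1 = (x^{n'} - 1)^{p^s}$ and the separability of $x^{n'} - 1$ when $p \nmid n'$, with $p \nmid e$ supplied by $e \mid q^d - 1$ --- but your version has the small advantage of describing the full set $\{\, n : f(x) \mid x^n - 1 \,\}$ rather than only its least element, which makes the final minimization transparent and avoids the need to first establish the divisibility sandwich. All the details you would need to check (uniqueness of the decomposition $n = p^s n'$, the coprimality $\gcd(x^{n'} - 1,\, n' x^{n'-1}) = 1$, the edge case $b = 1$ giving $t = 0$) do go through as you indicate.
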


\begin{theorem}\label{theorem-ord-prod}
Let $g_1(x), \ldots, g_k(x) \in \mathbb{F}_q[x]$ be pairwise relatively prime nonzero polynomials, and let $f(x) = g_1(x) \ldots g_k(x)$. Then $\ord(f(x))$ is equal to the least common multiple of $\ord(g_1(x)), \ldots, \ord(g_k(x))$.
\end{theorem}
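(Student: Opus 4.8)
The plan is to reduce the claim to a statement about divisibility of $x^n - 1$ and then exploit pairwise coprimality together with the multiplicative-order characterization of the order recorded in the definition. First I would strip off the powers of $x$. Write each $g_i(x) = x^{r_i} h_i(x)$ with $\gcd(h_i(x), x) = 1$. Since $x$ is irreducible, hence prime, in $\mathbb{F}_q[x]$ and the $g_i$ are pairwise relatively prime, at most one of the $r_i$ can be positive; consequently $f(x) = x^r h(x)$ where $r = \sum_i r_i$ and $h(x) = \prod_i h_i(x)$ with $\gcd(h(x), x) = 1$. By the definition of order we have $\ord(g_i(x)) = \ord(h_i(x))$ and $\ord(f(x)) = \ord(h(x))$, and the $h_i(x)$ remain pairwise relatively prime. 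Thus it suffices to prove the result when every polynomial is coprime to $x$, that is, to show $\ord(h(x)) = \lcm(\ord(h_1(x)), \ldots, \ord(h_k(x)))$.

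Next, set $e_i = \ord(h_i(x))$ and $e = \lcm(e_1, \ldots, e_k)$. The pivotal step is the equivalence, for each positive integer $n$,
\[ h_i(x) \mid x^n - 1 \iff e_i \mid n. \]
This comes directly from the second description of the order: $e_i$ is the multiplicative order of $\alpha_i = x + \langle h_i(x) \rangle$ in the unit group of $\mathbb{F}_q[x]/\langle h_i(x) \rangle$, and in any group a power $\alpha_i^n$ equals the identity precisely when the order of $\alpha_i$ divides $n$. Since $\alpha_i^n = 1$ in the quotient is exactly the statement $h_i(x) \mid x^n - 1$, the equivalence follows.

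Finally I would assemble these local conditions using pairwise coprimality. Because the $h_i(x)$ are pairwise relatively prime in the PID $\mathbb{F}_q[x]$, their product agrees with their least common multiple up to a unit, so $h(x) = \prod_i h_i(x)$ divides $x^n - 1$ if and only if every $h_i(x)$ divides $x^n - 1$. Combining this with the equivalence above, $h(x) \mid x^n - 1$ holds if and only if $e_i \mid n$ for all $i$, which holds if and only if $\lcm(e_1, \ldots, e_k) = e$ divides $n$. The smallest positive such $n$ is $e$ itself, whence $\ord(h(x)) = e$, and unwinding the reduction gives $\ord(f(x)) = \lcm(\ord(g_1(x)), \ldots, \ord(g_k(x)))$ as claimed.

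The hard part — indeed the only genuinely substantive content — is the bookkeeping in the reduction step: one must verify that at most one factor carries a power of $x$, that discarding these powers preserves both the orders and the pairwise coprimality, and that for pairwise coprime factors in $\mathbb{F}_q[x]$ the product divides $x^n-1$ exactly when each factor does (equivalently, that product equals least common multiple here). Once these divisibility facts are in hand, the passage to least common multiples of the individual orders is purely formal.
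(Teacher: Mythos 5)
Your proof is correct. Note that the paper does not prove this theorem itself but cites~\cite[Theorems~3.8, 3.9 and 3.11]{LN}, and your argument is exactly the standard one given there: strip powers of $x$ (pairwise coprimality forcing at most one factor to carry them), use the equivalence $h_i(x) \mid x^n - 1 \iff \ord(h_i(x)) \mid n$ from the multiplicative-order characterization, and observe that a product of pairwise coprime polynomials divides $x^n - 1$ precisely when each factor does.
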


In order to establish the connection between the periods of linear recurrence sequences and the orders of certain polynomials, we must first introduce the following matrix.
Using the coefficients in Equation~(\ref{eqn-lin-rec}), we define a $k \times k$ matrix $C$ over $\mathbb{F}_q$ by
\begin{equation}\label{eqn-special-matrix}
C = 
\left(
\begin{matrix}
0 & 0  & \ldots & 0 & c_0 \\
1 & 0  & \ldots & 0 & c_1 \\
0 & 1  & \ldots & 0 & c_2 \\
\vdots & \vdots & \ddots & \vdots \\
0 & 0 & \ldots & 1 & c_{k-1} 
\end{matrix}
\right).
\end{equation}
This matrix is called the {\em companion matrix} of the linear recurrence.
It is straightforward to verify that $s_n C = s_{n+1}$ for all $n \geq 0$ where $s_n$ is the $n$th state vector of a sequence satisfying the recurrence and we view $s_n$ as a row vector. By induction, it follows that $s_n =  s_0 C^n$ for all $n \geq 0$. From this point onwards, we will assume that $\det C = (-1)^{k-1} c_0 \neq 0$ and hence that $C$ is an element of the general linear group $GL_k(\mathbb{F}_q)$. Recall from Section~\ref{section-introduction} that under this assumption on $c_0$, all sequences associated with the recurrence are periodic starting from the initial state. 

There is an important relationship between the periods of sequences defined by a given linear recurrence and the order of the corresponding companion matrix $C$ in the multiplicative group $GL_k(\mathbb{F}_q)$. Recall that the order of a matrix $C$ is the smallest positive integer $n$ such that $C^n = I_k$ where $I_k \in GL_k(\mathbb{F}_q)$ is the $k \times k$ identity matrix. We will  denote this quantity by $\ord(C)$.

\begin{theorem}\label{theorem-period-matrixorder}
Let $\mathbf{a}$ be a sequence satisfying a linear recurrence and let $C$ be the companion matrix, then $\per(\mathbf{a})$ divides $\ord(C)$. Furthermore, for any given companion matrix $C$, there exists a sequence $\b$ satisfying the associated recurrence with $\per(\b) = \ord(C)$.
\end{theorem}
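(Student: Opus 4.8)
The plan is to treat the two assertions separately, the second being the substantive one. Throughout I use the relation $s_n = s_0 C^n$ established above, so that the period of a sequence with initial state $v = s_0$ is precisely the least $m > 0$ with $v C^m = v$, equivalently with $v(C^m - I_k) = 0$.

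For the divisibility statement $\per(\mathbf{a}) \mid \ord(C)$, I would argue directly. Set $N = \ord(C)$, so that $C^N = I_k$. Then for every $n \geq 0$ we have $s_{n+N} = s_0 C^{n+N} = (s_0 C^n) C^N = s_n$, and in particular $a_{n+N} = a_n$. Thus $\mathbf{a}$ is $N$-periodic, and by the divisibility characterization of periodicity recalled in Section~\ref{section-introduction}, $\per(\mathbf{a})$ divides $N = \ord(C)$.

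For the existence of a sequence $\b$ attaining the bound, the idea is to choose the initial state to be a \emph{cyclic vector} for $C$, that is, a row vector $v$ for which $v, vC, \ldots, vC^{k-1}$ form a basis of $\mathbb{F}_q^k$. Such a vector exists because the minimal and characteristic polynomials of a companion matrix coincide; concretely, one can verify that $v = (1, 0, \ldots, 0)$ works, since the assumption $c_0 \neq 0$ forces the successive images $vC^i$ to introduce a new standard basis direction at each step, making them linearly independent. I would then let $\b$ be the sequence with $s_0 = v$. It remains to show $\per(\b) = \ord(C)$. One inequality, $\per(\b) \mid \ord(C)$, is the first part applied to $\b$. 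For the reverse, the key observation is that for a cyclic vector $v$ and any polynomial $p(x)$, the relation $v\,p(C) = 0$ already forces $p(C) = 0$: using that $C$ commutes with $p(C)$, we get $(vC^i)\,p(C) = v\,p(C)\,C^i = 0$ for all $i$, so $p(C)$ annihilates the spanning set $\{vC^i\}$ and hence vanishes. Applying this with $p(x) = x^m - 1$, where $m = \per(\b)$, the relation $v(C^m - I_k) = 0$ yields $C^m = I_k$, so $m \geq \ord(C)$; combined with the first part this gives equality.

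The main obstacle is the second part, and within it the production of the cyclic vector: one must know (or check by hand) that the orbit of a single vector under $C$ spans the whole space, which is exactly the statement that the companion matrix is non-derogatory. Once that is in hand, the implication ``$v\,p(C) = 0 \Rightarrow p(C) = 0$'' is a short linear-algebra argument and the remainder is formal.
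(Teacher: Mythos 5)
Your proof is correct, and its engine is the same as the paper's: choose an initial state whose first $k$ state vectors form a basis of $\mathbb{F}_q^k$, and observe that $m$-periodicity then forces $C^m = I_k$ because $C^m - I_k$ annihilates a basis. The differences are organizational. First, you prove the divisibility claim directly for every sequence $\a$ from $C^{\ord(C)} = I_k$, bypassing the paper's opening argument that expresses an arbitrary solution as a linear combination of cyclic shifts of the impulse response sequence; the paper itself notes parenthetically that this shortcut is available, but it runs the longer route because the linear-combination decomposition is a reusable idea (it resurfaces in the discussion following the theorem and in the treatment of minimal polynomials). Your leaner version loses nothing for the theorem as stated. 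Second, you take the cyclic vector $v = (1,0,\ldots,0)$ where the paper takes the impulse-response state $t_0 = (0,\ldots,0,1)$. Your sketch that $v$ is cyclic is right: one computes $vC = c_0 e_k$ and then $vC^{j} = c_0 e_{k-j+1} + (\text{combination of } e_{k-j+2},\ldots,e_k)$ for $1 \leq j \leq k-1$, an anti-triangular system with nonzero anti-diagonal precisely because $c_0 \neq 0$. The paper's choice is marginally cleaner here, since the states $t_0,\ldots,t_{k-1}$ are triangular with $1$'s on the anti-diagonal and are independent with no hypothesis on $c_0$; but since $c_0 \neq 0$ is a standing assumption (and is needed for $\ord(C)$ to be defined at all), your reliance on it is harmless. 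Your cyclic-vector lemma, that $v\,p(C) = 0$ forces $p(C) = 0$ via $(vC^i)p(C) = v\,p(C)C^i = 0$, is exactly the commutation argument the paper applies to the basis $\{t_i\}$ with $p(x) = x^m - 1$, just stated in more general form.
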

\begin{proof}
This result appears in~\cite[Theorems~8.13 and 8.17]{LN}. Since the proof is fairly short and contains some important ideas, we recall it here. 

Let $\a$ be an arbitrary sequence satisfying the given recurrence. We first show that $\per(\a)$ divides $\per(\b)$ for a special sequence $\b$ which we now introduce.  Let $\b$ be the sequence satisfying the same recurrence with initial state $t_0 = (0,\ldots,0,1)$ -- so the first $k-1$ entries equal $0$ and the $k$th entry is $1$. Then the $n$th state $t_n$ satisfies $t_n = t_0 C^n$ for all $n \geq 0$. The first $k$ states then have the following special form
\begin{eqnarray*}
t_0 &=& (0,\ldots,0,0,1) \\
t_1 &=& (0,\ldots,0,1,*) \\ 
t_2 &=& (0, \ldots, 1, *, *) \\ 
\vdots && \qquad \vdots \\
t_{k-1} &=& (1,*, \ldots, *, *, *)
\end{eqnarray*} 
where  $*$ denotes some element in $\mathbb{F}_q$ that we do not specify exactly. Clearly, $\mathcal{B} = \{t_i \mid 0 \leq i \leq k-1 \}$ is a basis for $\mathbb{F}_q^k$ since the matrix formed by these vectors has full rank~$k$. It follows that the initial state $s_0$ of $\a$ can be expressed as a linear combination of the vectors in $\mathcal{B}$. Using the linearity of the recurrence, one can then see that the sequence $\a$ is a linear combination of the sequences with initial states in $\mathcal{B}$. These sequences consist of $\b$ together with its cyclic shifts and so all have the same period $\per(\b)$. Any linear combination of $m$-periodic sequences is $m$-periodic, so the sequence $\a$ must be $\per(\b)$-periodic. It follows that $\per(\a)$ divides $\per(\b)$.

To finish, we now show that $\per(\b) = \ord(C)$. Let $m = \per(\b)$ and $n = \ord(C)$. Since $C^n = I_k$, we see that $t_n = t_0 C^n = t_0$. It follows that $\b$ is $n$-periodic and so $m$ must divide $n$. We note that this part of the argument does not make use of the special nature of $\b$ and could be applied to an arbitrary sequence $\a$, thus establishing directly that $\per(\a)$ always divides $\ord(C)$. Now observe that since $\b$ is $m$-periodic, the sequence of state vectors must also be $m$-periodic and so we have $t_{m + i} = t_i$ for all $i \geq 0$. But $t_{m+i} =  t_i C^m$, thus $t_i C^m = t_i I_k$ for all $i \geq 0$ and, in particular, this holds for all of the state vectors in the basis $\mathcal{B}$. It follows that $C^m = I_k$ and so the order $n$ must divide $m$. Since $m =  \per(\b)$ and $n = \ord(C)$ are positive integers that divide each other, we conclude that $\per(\b) = \ord(C)$ as desired.
\end{proof}

\begin{remark}\label{remark-max-period-proof}
Theorem~\ref{theorem-period-matrixorder} shows that the maximum period for a given recurrence with associated matrix $C$ is $\ord(C)$. The sequence $\b$ with initial sequence $t_0 = (0,\ldots,0,1)$ which is shown to attain this maximum in the proof is called the {\em impulse response sequence} for the given recurrence. If one examines the argument used at the end of the proof, one sees that any sequence $\a$, for which the first $k$ state vectors $\{ s_i \mid 0 \leq i \leq k-1 \}$ form a basis for $\mathbb{F}_q^k$, will also have the property that $\per(\a) = \ord(C)$.
\end{remark}

There is an important polynomial associated to the matrix $C$ and so also the recurrence~(\ref{eqn-lin-rec}). 
\begin{definition}
The {\em characteristic polynomial of $C \in GL_k(\mathbb{F}_q)$} is the polynomial 
\[ f(x) = \det(x I_k - C) \in \mathbb{F}_q[x]. \] 
Observe that if $C$ is a $k \times k$ matrix, then we have $\deg f(x) = k$.
\end{definition}
For matrices $C$ of the special form~(\ref{eqn-special-matrix}) above, it is a straightforward exercise using properties of determinants to show that 
\[   f(x) = \det(x I_k - C) = x^k - \sum_{i=0}^{k-1} c_i x^i. \]
The latter polynomial is very closely related to the original recurrence equation. Indeed, if one sets $n=0$ and formerly substitutes $x^i$ for $a_i$ in Equation~(\ref{eqn-lin-rec}), then this polynomial appears after moving all of the terms to one side.

We now show that there is a direct connection between the order of this polynomial (as defined at the beginning of the section) and the order of the matrix $C$.

\begin{theorem}\label{theorem-matrix-poly-order}
Let $C \in GL_k(\mathbb{F}_q)$ and let $f(x)$ be the associated characteristic polynomial. Then $\ord(C) = \ord(f(x))$.
\end{theorem}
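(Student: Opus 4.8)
The plan is to show that the two positive integers $\ord(C)$ and $\ord(f(x))$ divide one another, hence are equal. Throughout I treat $C$ as a companion matrix of the form~(\ref{eqn-special-matrix}); this structure is essential, since for a general invertible matrix the order is governed by the minimal rather than the characteristic polynomial. First I would record two preliminary observations. Since $C$ is invertible we have $c_0 \neq 0$ and hence $f(0) = -c_0 \neq 0$, so $\gcd(f(x),x) = 1$; thus $\ord(f(x))$ is the least $n > 0$ with $f(x) \mid x^n - 1$, and more generally $f(x) \mid x^n - 1$ holds precisely when $\ord(f(x)) \mid n$, which is immediate from interpreting $\ord(f(x))$ as the multiplicative order of $x + \langle f(x) \rangle$ in $\mathbb{F}_q[x]/\langle f(x) \rangle$. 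The second observation is the Cayley--Hamilton relation $f(C) = 0$, which for a companion matrix can also be verified by a direct computation if one prefers not to invoke the general theorem.

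For the divisibility $\ord(C) \mid \ord(f(x))$, I would set $N = \ord(f(x))$ and write $x^N - 1 = f(x)h(x)$. Substituting $C$ and using $f(C) = 0$ gives $C^N - I_k = f(C)h(C) = 0$, so $C^N = I_k$ and therefore $\ord(C) \mid N$.

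The reverse divisibility $\ord(f(x)) \mid \ord(C)$ is where the companion structure does the real work, and I expect this to be the main obstacle. The key sub-claim is that no nonzero polynomial of degree less than $k$ annihilates $C$, i.e.\ that $f(x)$ is in fact the minimal polynomial of $C$. To prove this I would reuse the vectors $t_0 = (0,\ldots,0,1)$ and $t_i = t_0 C^i$ from the proof of Theorem~\ref{theorem-period-matrixorder}, which were shown there to form a basis $\{t_0, \ldots, t_{k-1}\}$ of $\mathbb{F}_q^k$; in other words $t_0$ is a cyclic vector for $C$. If $r(x) = \sum_{i=0}^{d} b_i x^i$ with $d < k$ satisfies $r(C) = 0$, then $0 = t_0\, r(C) = \sum_{i=0}^{d} b_i t_0 C^i = \sum_{i=0}^{d} b_i t_i$, and linear independence of $t_0, \ldots, t_d$ forces every $b_i = 0$, so $r(x) = 0$. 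With this in hand I would set $M = \ord(C)$ and apply the division algorithm: $x^M - 1 = f(x)q(x) + r(x)$ with $\deg r < k$. Evaluating at $C$ and using both $C^M = I_k$ and $f(C) = 0$ yields $r(C) = 0$, whence $r(x) = 0$ by the sub-claim. Thus $f(x) \mid x^M - 1$, and by the first observation $\ord(f(x)) \mid M = \ord(C)$.

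Combining the two divisibilities between the positive integers $\ord(C)$ and $\ord(f(x))$ gives the desired equality. The only genuinely delicate point is the minimality sub-claim, and the payoff of having already constructed the impulse-response basis in Theorem~\ref{theorem-period-matrixorder} is that a cyclic vector is handed to us for free, reducing that point to a one-line linear-independence argument.
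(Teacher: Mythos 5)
Your proof is correct and takes essentially the same approach as the paper's: both rest on the sub-claim that no nonzero polynomial of degree less than $k$ annihilates $C$, established via the impulse-response basis $\{t_0,\ldots,t_{k-1}\}$ from the proof of Theorem~\ref{theorem-period-matrixorder}, combined with Cayley--Hamilton and the observation $f(0) = -c_0 \neq 0$. The only difference is organizational --- the paper packages the two divisibilities through the annihilator ideal $J = \{ g(x) \mid g(C) = 0\} = \langle f(x) \rangle$ in the principal ideal domain $\mathbb{F}_q[x]$, while you unwind this into explicit division-algorithm arguments --- and your remark that the companion structure is essential (a general matrix obeys its minimal, not characteristic, polynomial) correctly identifies the implicit hypothesis in the paper's statement.
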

\begin{proof} This result appears in~\cite[Lemma~8.26]{LN}. The proof there assumes some familiarity with advanced linear algebra so we give a self-contained proof here.

Consider the following set of polynomials 
\[ J = \{ g(x) \in \mathbb{F}_q[x] \mid g(C) = 0 \} \]
where the expression $g(C)$ is evaluated in the natural way (treating the constant term as a scalar multiple of the identity matrix) and the equation $g(C) = 0$ means $g(C)$ is the zero matrix. It is a consequence of the Cayley-Hamilton Theorem, that $f(C) = 0$ and so $f(x) \in J$. For matrices $C$ of the special form~(\ref{eqn-special-matrix}) above, this can also be seen directly as follows. 
If we consider the impulse response sequence $\b$ with $n$th state vector $t_n$, then since the state vectors also satisfy the recurrence~(\ref{eqn-lin-rec}), we have
\begin{eqnarray*}
 0 =  t_{n+k} - \sum_{i=0}^{k-1} c_i t_{n+i}  &=& t_n C^k - \sum_{i=0}^{k-1} c_i (t_n C^i) \\
&=& t_n \left( C^k - \sum_{i=0}^{k-1} c_i C^i \right) \\
&=& t_n f(C)
\end{eqnarray*}
for all $n \geq 0$. In particular, this holds for all of the vectors in the basis $\mathcal{B} = \{t_i \mid 0 \leq i \leq k-1 \}$ and so we must have $f(C) = 0$ as desired.

Having shown that $f(x) \in J$, we now observe that $J$ is an ideal and so $\langle f(x) \rangle \subseteq J$ where $\langle f(x) \rangle$ is the principal ideal generated by $f(x)$. In fact, equality must hold. This can be seen using the standard fact that $\mathbb{F}_q[x]$ is a principal ideal domain and verifying that $J$ does not contain any nonzero polynomials of smaller degree. Suppose that this were not the case and $g(x) \in J$ with $\deg g(x) < \deg f(x) = k$. Then since $g(C) = 0$, we would have $t_n g(C) = 0$ for all $n \geq 0$. Expanding the left-hand side when $n = 0$ would then give a dependence relation among the vectors in $\mathcal{B}$ which is a contradiction since these vectors are linearly independent. 

Having established that $J = \langle f(x) \rangle$, we finish by noting that $C^n = I_k$ holds if and only if $x^n - 1 \in J$ which in turn holds if and only if $f(x)$ divides $x^n - 1$. It follows from the definitions that $\ord(C) = \ord(f(x))$. Note that $f(0) = -c_0 \neq 0$, so $\gcd(x,f(x)) = 1$.
\end{proof}

Combining Theorem~\ref{theorem-period-matrixorder} and Theorem~\ref{theorem-matrix-poly-order}, we have
\begin{corollary}\label{corollary-period-poly-ord}
Let $f(x)$ be the characteristic polynomial of a linear recurrence and let $\a$ be any sequence satisfying the recurrence, then $\per(\a)$ divides $\ord(f(x))$. Moreover, there exist sequences with $\per(\a) = \ord(f(x))$.
\end{corollary}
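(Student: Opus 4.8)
The plan is to simply chain together the two theorems just proved, since the corollary is an immediate consequence of them. Let $C$ denote the companion matrix associated to the given recurrence, as in Equation~(\ref{eqn-special-matrix}), and recall that the displayed computation above identifies $f(x) = x^k - \sum_{i=0}^{k-1} c_i x^i$ as the characteristic polynomial of $C$ in the sense of the preceding definition. Our standing assumption $c_0 \neq 0$ guarantees $C \in GL_k(\mathbb{F}_q)$, so both theorems apply.

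First I would invoke Theorem~\ref{theorem-matrix-poly-order}, which asserts the equality $\ord(C) = \ord(f(x))$. This lets us freely interchange the order of the matrix and the order of its characteristic polynomial in everything that follows. Next, for the divisibility claim, I would apply the first assertion of Theorem~\ref{theorem-period-matrixorder}: any sequence $\a$ satisfying the recurrence has $\per(\a)$ dividing $\ord(C)$. Substituting $\ord(C) = \ord(f(x))$ yields that $\per(\a)$ divides $\ord(f(x))$, as required.

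For the existence claim, I would appeal to the second assertion of Theorem~\ref{theorem-period-matrixorder}, which produces a sequence $\b$ (the impulse response sequence of Remark~\ref{remark-max-period-proof}) satisfying the recurrence with $\per(\b) = \ord(C)$. Again substituting the equality from Theorem~\ref{theorem-matrix-poly-order} gives $\per(\b) = \ord(f(x))$, completing the proof. There is essentially no obstacle here: the entire content of the corollary resides in the two theorems it combines, and the only point worth verifying -- that the polynomial $f(x)$ appearing in the corollary is genuinely the characteristic polynomial of the companion matrix -- has already been dispatched by the determinant computation preceding the statement.
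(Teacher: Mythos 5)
Your proof is correct and matches the paper exactly: the paper derives Corollary~\ref{corollary-period-poly-ord} by simply combining Theorem~\ref{theorem-period-matrixorder} and Theorem~\ref{theorem-matrix-poly-order}, which is precisely your chain of substitutions. Your added check that $f(x)$ really is the characteristic polynomial of the companion matrix is a reasonable (if already-established) point of care.
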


\begin{corollary}\label{corollary-max-period-finite-field}
For each prime power $q$ and $k \in \Zpos$, there exists a linear recurrence sequence $\a$ over $\mathbb{F}_q$ of degree $k$  achieving the maximum possible period $\per(\a) = q^k - 1$.
\end{corollary}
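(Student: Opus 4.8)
The plan is to exhibit a single degree-$k$ recurrence whose characteristic polynomial $f(x)$ has order exactly $q^k - 1$. Once this is in hand, Corollary~\ref{corollary-period-poly-ord} immediately yields a sequence $\a$ satisfying the recurrence with $\per(\a) = \ord(f(x)) = q^k - 1$, and the pigeonhole bound recalled in Section~\ref{section-introduction} guarantees that this is the largest period possible for a degree-$k$ recurrence over $\F_q$. The central idea is to take $f(x)$ to be the minimal polynomial over $\F_q$ of a generator of the cyclic group $\F_{q^k}^\times$, so that the arithmetic of the extension field does all of the work for us.

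First I would invoke the standard fact that $\F_{q^k}^\times$ is cyclic of order $q^k - 1$ and fix a generator $\alpha$, so that the multiplicative order of $\alpha$ equals $q^k - 1$. Let $f(x) \in \F_q[x]$ be its (monic, irreducible) minimal polynomial over $\F_q$. To see that $\deg f(x) = k$, note that every nonzero element of $\F_{q^k}$ is a power of $\alpha$, so the subfield $\F_q(\alpha)$ contains all of $\F_{q^k}$, forcing $\F_q(\alpha) = \F_{q^k}$ and hence $[\F_q(\alpha):\F_q] = k$. Since $\alpha \neq 0$ it is not a root of $x$, so $f(0) \neq 0$ and $\gcd(f(x),x) = 1$; Remark~\ref{remark-ord-irred} then applies directly, telling us that $\ord(f(x))$ equals the multiplicative order of the root $\alpha$ in $\F_{q^k}^\times$, namely $q^k - 1$. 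Writing $f(x) = x^k - \sum_{i=0}^{k-1} c_i x^i$ reads off the coefficients $c_0, \ldots, c_{k-1}$ of a degree-$k$ recurrence, and since $c_0 = -f(0) \neq 0$ its companion matrix lies in $GL_k(\F_q)$, so Corollary~\ref{corollary-period-poly-ord} applies and produces the desired sequence.

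The only genuinely substantive ingredient is the existence of the primitive element $\alpha$, i.e.\ the cyclicity of $\F_{q^k}^\times$, together with the degree computation $\deg f(x) = k$; everything else is routine bookkeeping threading $\alpha$ through the results already established. I expect the degree claim to be the point requiring the most care: one must rule out the possibility that $\alpha$ lies in a proper intermediate field $\F_{q^d}$ with $d \mid k$ and $d < k$. This is exactly what primitivity prevents, since such a subfield would have multiplicative group of order $q^d - 1 < q^k - 1$, too small to contain an element of order $q^k - 1$. Hence $\F_q(\alpha)$ must be all of $\F_{q^k}$, and the construction goes through.
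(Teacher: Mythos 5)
Your proof is correct and follows essentially the same route as the paper's: take the minimal polynomial of a generator of $\F_{q^k}^\times$ (a primitive polynomial), apply Remark~\ref{remark-ord-irred} to get $\ord(f(x)) = q^k - 1$, and conclude via Corollary~\ref{corollary-period-poly-ord}. Your explicit verification that $\deg f(x) = k$ (ruling out $\alpha$ lying in a proper subfield) is a detail the paper leaves implicit, and it is handled correctly.
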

\begin{proof} The multiplicative group of the finite field $\mathbb{F}_{q^k}$ is cyclic of order $q^k - 1$. Let $f(x)$ be the minimum polynomial of a generator for this group.  (Such an irreducible polynomial is said to be {\em primitive}). Then we have $\ord( f(x) ) = q^k - 1$ by Remark~\ref{remark-ord-irred} and so the statement now follows by Corollary~\ref{corollary-period-poly-ord}.
\end{proof}

More can be said about the possible values of $\per(\a)$. Observe that every periodic sequence $\a$ satisfies a linear recurrence. For instance, if $\a$ is $m$-periodic then it automatically satisfies the degree $m$ recurrence $a_{n+m} = a_n$ for all $n \geq 0$. Among the set of all such recurrences, we single out the one of smallest degree. This is uniquely determined since if there were two distinct recurrences of this degree, we could subtract, cancel and divide by the non-zero coefficient on the largest index term remaining to obtain a recurrence of even smaller degree satisfied by the sequence which would be a contradiction.

\begin{definition}
Let $\a$ be a nonzero periodic sequence. We define the {\em minimal polynomial $m(x)$} to be the characteristic polynomial of the (unique) linear recurrence of smallest degree satisfied by $\a$. For the zero sequence $\a = ( 0 )_{n=0}^\infty$, we set $m(x) = 1$.
\end{definition}

From the definition and preceding observations, we clearly have $\deg m(x) \leq \per(\a)$ and $\deg m(x) \leq \deg f(x)$ for all characteristic polynomials $f(x)$ of linear recurrences satisfied by $\a$. In fact, a much stronger statement holds.

\begin{theorem}\label{theorem-period-minpoly}
Let $\a$ be a periodic sequence and let $m(x)$ be the minimal polynomial of~$\a$. Then $f(x)$ is the characteristic polynomial of a linear recurrence satisfied by $\a$ if and only if $m(x)$ divides $f(x)$. We also have $\per(\a) = \ord(m(x))$.
\end{theorem}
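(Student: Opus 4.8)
The plan is to repackage the collection of recurrences satisfied by $\a$ as an ideal in $\mathbb{F}_q[x]$ and then read off both assertions from the structure of that ideal. I would introduce the shift operator $T$ on sequences, defined by $(T\a)_n = a_{n+1}$, and for a polynomial $g(x) = \sum_i g_i x^i \in \mathbb{F}_q[x]$ let $g(T)\a$ denote the sequence whose $n$th term is $\sum_i g_i a_{n+i}$. The first observation to record is that a monic polynomial $f(x)$ is the characteristic polynomial of a linear recurrence satisfied by $\a$ precisely when $f(T)\a$ is the zero sequence; this is simply a rewriting of Equation~(\ref{eqn-lin-rec}) after moving all terms to one side. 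I would then define the annihilator set $I = \{ g(x) \in \mathbb{F}_q[x] \mid g(T)\a = \mathbf{0} \}$.

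Next I would check that $I$ is an ideal of $\mathbb{F}_q[x]$. Closure under addition is immediate, and closure under multiplication by an arbitrary $p(x)$ follows from $(p\,g)(T)\a = p(T)\bigl(g(T)\a\bigr)$, using that the operators $g(T)$ commute since they are all polynomials in the single operator $T$. Because $\a$ is periodic we have $a_{n+\per(\a)} = a_n$ for all $n \geq 0$, so $x^{\per(\a)} - 1 \in I$ and in particular $I \neq \{0\}$. Since $\mathbb{F}_q[x]$ is a principal ideal domain, $I = \langle d(x) \rangle$ for a unique monic generator $d(x)$, which is the monic polynomial of least degree in $I$. I would then identify $d(x)$ with the minimal polynomial $m(x)$: the characteristic polynomials of recurrences satisfied by $\a$ are exactly the monic elements of $I$, and $m(x)$ is by definition the one of least degree, so $m(x) = d(x)$ and hence $I = \langle m(x) \rangle$. (When $\a = \mathbf{0}$ every polynomial annihilates it, giving $I = \langle 1 \rangle$, consistent with the convention $m(x) = 1$.)

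The equivalence in the theorem now falls out: a monic $f(x)$ is the characteristic polynomial of a recurrence satisfied by $\a$ if and only if $f(x) \in I = \langle m(x) \rangle$, which for monic polynomials is exactly the condition $m(x) \mid f(x)$. For the period formula I would first note that $m(x)$ divides $x^{\per(\a)} - 1$, as the latter lies in $I$; since $x^{\per(\a)} - 1$ has nonzero constant term, so does $m(x)$, whence $\gcd(m(x),x) = 1$ and $\ord(m(x))$ is governed directly by the multiplicative order of $x$ modulo $m(x)$. The key chain of equivalences is then, for each positive integer $N$,
\[ \a \text{ is } N\text{-periodic} \iff x^N - 1 \in I = \langle m(x) \rangle \iff m(x) \mid x^N - 1 \iff \ord(m(x)) \mid N. \]
Comparing this with the fact recalled in Section~\ref{section-introduction} that $\a$ is $N$-periodic if and only if $\per(\a) \mid N$, I would conclude that the positive multiples of $\per(\a)$ and of $\ord(m(x))$ coincide, forcing $\per(\a) = \ord(m(x))$.

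The step I expect to require the most care is the identification $d(x) = m(x)$, where one must be precise that the characteristic polynomials are exactly the monic members of $I$ and that the monic generator of a principal ideal is its least-degree monic element; everything else is bookkeeping with the shift operator together with the elementary divisibility property of $\ord$. As an alternative to the final chain of equivalences, one could instead invoke Corollary~\ref{corollary-period-poly-ord} with $f = m$ to get $\per(\a) \mid \ord(m(x))$ and pair it with $m(x) \mid x^{\per(\a)} - 1$ for the reverse divisibility, but the ideal-theoretic argument above is cleaner and keeps the proof self-contained.
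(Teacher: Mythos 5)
Your proof is correct, but it takes a genuinely different route from the paper's. The paper does not prove the divisibility statement at all: it cites~\cite[Section~8.4]{LN} for that part, and establishes only $\per(\a) = \ord(m(x))$, either by pairing Corollary~\ref{corollary-period-poly-ord} (which gives $\per(\a) \mid \ord(m(x))$) with the reverse inequality, or by the direct observation that the first $\deg m(x)$ state vectors of $\a$ are linearly independent, hence a basis, so Remark~\ref{remark-max-period-proof} applies. Your annihilator-ideal argument instead proves everything from scratch: identifying the set $I = \{g(x) \mid g(T)\a = \mathbf{0}\}$ with the principal ideal $\langle m(x) \rangle$ yields the divisibility criterion immediately, and your chain of equivalences linking $N$-periodicity, $m(x) \mid x^N - 1$, and $\ord(m(x)) \mid N$ gives the period formula without companion matrices or impulse response sequences --- this is in effect the treatment in~\cite{LN}, where the minimal polynomial is defined via divisibility. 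What you buy is self-containedness and the slightly stronger conclusion that the periods of $\a$ are exactly the positive multiples of $\ord(m(x))$; what the paper buys is brevity, reusing the machinery of Section~2. One hypothesis to flag: your step $x^{\per(\a)} - 1 \in I$ uses that periodicity begins at the initial term, which is legitimate under the standing restriction of Section~\ref{section-introduction} to recurrences with $c_0$ a unit, but for merely ultimately periodic sequences the generator of $I$ could acquire a factor of $x$; you otherwise handle the delicate points (monicity, the zero-sequence convention, and $\gcd(m(x),x)=1$) correctly.
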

\begin{proof}
For a proof of the divisibility statement, which we will not need in our subsequent work, see~\cite[Section~8.4]{LN}. Note that in~\cite{LN}, the minimal polynomial is defined via a divisibility condition. After existence and uniqueness have been demonstrated, it is straightforward to see that this is equivalent to the definition used here. 

For the second statement, observe that $\per(\a)$ divides $\ord(m(x))$ by Corollary~\ref{corollary-period-poly-ord} and, as noted above, we have $\ord(m(x)) \leq \per(\a)$. Equality follows immediately. Alternatively, one can see directly that if $\deg(m(x)) = k$, then the first $k$ state vectors of $\a$ must be linearly independent since a dependence relation would give rise to a recurrence of smaller degree satisfied by the full sequence of state vectors and hence also $\a$. It follows that the first $k$ state vectors form a basis of $\mathbb{F}_q^k$ since the dimension is $k$, and so we can apply Remark~\ref{remark-max-period-proof} to see that $\per(\a) = \ord(m(x))$.
\end{proof}

We conclude by showing how the results in this section provide information about the periods of sequences in the case of the Fibonacci recurrence.

\begin{example}
The Fibonacci recurrence $a_{n+2} = a_{n-1} + a_n$ has been extensively studied over both $\mathbb{Z}_n$ and finite fields. Recent work includes~\cite{GK, GRS} and additional references can be found in these papers.  
For this recurrence, the companion matrix is 
\[ C = 
\left(\begin{matrix}
0 & 1 \\
1 & 1 
\end{matrix}\right)
\]
and the characteristic polynomial is $f(x) = x^2 - x - 1$. The sequence $\a$ defined by this recurrence with initial state $s_0 = (0,1)$ is the impulse response sequence and so attains the maximum possible period $\per(\a) = \ord(f(x))$. Other initial states will give rise to sequences whose periods divide this maximum. Since the recurrence and initial state for $\a$ are both defined over the prime subfield $\mathbb{F}_p \subseteq \mathbb{F}_q$, the entire sequence will also lie in $\mathbb{F}_p$ and so it suffices to consider the case $q = p$ prime.

Determining the value of $\ord(f(x))$ involves first factoring $f(x)$ in $\mathbb{F}_p[x]$. We now consider various cases. If $p = 2$, then one observes that $f(x) = x^2 + x + 1$ is irreducible. Since $\alpha = x + \langle f(x) \rangle$ must have nontrivial multiplicative order in $\mathbb{F}_2[x]/\langle f(x) \rangle \cong \mathbb{F}_4$, we see that $\ord(f(x)) = 3$. Alternatively, one can simply generate the sequence and observe that the period is $3$ as we did in Example~\ref{example-fib-mod-2}.

If $p$ is odd, then $2$ is invertible in $\mathbb{F}_p$ and we can complete the square to write $f(x) = (x - 2^{-1})^2 - \Delta/2^{-2}$ where $\Delta = 5$ is the discriminant of $f(x)$. It follows that $f(x)$ will be irreducible over $\mathbb{F}_p$ if and only if $5$ is not a square in $\mathbb{F}_p$. By the law of quadratic reciprocity, we see that $5$ is not a square in $\mathbb{F}_p$ if and only if $p \equiv 2,3 \pmod 5$, and $5$ is a square if and only if $p \equiv 1,4 \pmod 5$ or $p = 5$.
\begin{itemize}
\item If $p \equiv 2,3 \pmod 5$, then $f(x)$ is irreducible over $\mathbb{F}_p$ and the order and hence period of the sequence will be the multiplicative order of $\alpha = x + \langle f(x) \rangle$ in $\mathbb{F}_p[x]/\langle f(x) \rangle \cong \mathbb{F}_{p^2}$. It follows that the order will be a divisor of $p^2 - 1$ that does not divide $p - 1$. (The second condition is forced since $\alpha$ cannot belong to the prime field $\mathbb{F}_p$ in this case.)
\item If $p \equiv 1,4 \pmod 5$, then $f(x)$ factors as a product of distinct linear factors over $\mathbb{F}_p$ and applying Theorem~\ref{theorem-ord-prod} we see that the period will divide $p - 1$. 
\item Finally, if $p = 5$, then we have a repeated linear factor $f(x) = (x - 3)^2$. Since $3$ has order $4$ in $\mathbb{F}_5$, we can apply Theorem~\ref{theorem-ord-irred-power} to see that $\ord(f(x)) = 20$. Alternatively, one can simply generate the sequence and observe that the period is $20$.
\end{itemize}

In fact, more can be said in the case where $f(x)$ is irreducible as shown in~\cite[Theorem~5]{GRS}. Given one root $\alpha$ of $f(x)$, the other root can be obtained by applying the Frobenius automorphism and is simply $\alpha^p$. Since the product of the roots of a quadratic is the constant coefficient, we obtain the relation $\alpha \cdot \alpha^p = \alpha^{p+1} = -c_0 = -1$. It follows that $\alpha^{2(p+1)} = 1$ and thus the order of $\alpha$ must divide $2(p+1)$ which is strictly smaller than $p^2 - 1$ for $p > 3$.
\end{example}

\section{Period sets of small degree recurrences}

In this section, we use the results from the previous section to determine the period sets for recurrences of small degree over finite fields. 

\begin{definition}
Let $k \in \Zpos$ and let $q$ be a prime power. We define the {\em period set of degree $k$ over $\mathbb{F}_q$} by
\[ P(k,\mathbb{F}_q) = \{ \per(\a) \mid \text{$\a$ satisfies a linear recurrence of degree $k$ over $\mathbb{F}_q$} \} 
\]
and the {\em order set of degree $k$ over $\mathbb{F}_q$} by
\[ O(k,\mathbb{F}_q) = \{ \ord(f(x)) \mid f(x) \in \mathbb{F}_q[x] \text{ and } \deg f(x) = k \}.  
\]
\end{definition}
A simple inductive argument, in which the inductive step involves adding together two copies of the recurrence equation for consecutive values of the leading index, shows that every sequence satisfying a recurrence of degree $j$ satisfies a recurrence of degree $k$ for all $k \geq j$. Hence, we have  $P(j,\mathbb{F}_q) \subseteq P(k,\mathbb{F}_q)$ for all $j \leq k$. Similarly, from the definition, we have $\ord(f(x)) = \ord(g(x))$ for $f(x) = x^d g(x)$ with $d \geq 0$, so then $O(j,\mathbb{F}_q) \subseteq O(k,\mathbb{F}_q)$ for all $j \leq k$.

The next lemma shows that if we want to compute the period set of a given degree, then it suffices to compute the corresponding order set.

\begin{lemma}\label{lemma-period-order}
For all $k \in \Zpos$ and prime powers $q$, we have $P(k,\mathbb{F}_q) = O(k,\mathbb{F}_q)$.
\end{lemma}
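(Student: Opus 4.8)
The plan is to prove the two inclusions $P(k,\mathbb{F}_q) \subseteq O(k,\mathbb{F}_q)$ and $O(k,\mathbb{F}_q) \subseteq P(k,\mathbb{F}_q)$ separately. In each direction I would reduce to a ``core'' polynomial of possibly smaller degree $d \le k$ and then close the gap using the monotonicity of the two families noted just before the lemma, namely $P(j,\mathbb{F}_q) \subseteq P(k,\mathbb{F}_q)$ and $O(j,\mathbb{F}_q) \subseteq O(k,\mathbb{F}_q)$ for $j \le k$. The point is that both directions ultimately collapse onto the single fact from Corollary~\ref{corollary-period-poly-ord} (refined through the minimal polynomial in Theorem~\ref{theorem-period-minpoly}), so essentially no new computation is required.

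For $P(k,\mathbb{F}_q) \subseteq O(k,\mathbb{F}_q)$, I would take an arbitrary sequence $\a$ satisfying a recurrence of degree $k$ and pass to its minimal polynomial $m(x)$, whose degree $d$ satisfies $d \le k$ since the minimal-degree recurrence satisfied by $\a$ has degree at most $k$. Theorem~\ref{theorem-period-minpoly} gives $\per(\a) = \ord(m(x))$ directly. Since $m(x)$ is a polynomial of degree $d$, its order lies in $O(d,\mathbb{F}_q)$ by definition, and then $O(d,\mathbb{F}_q) \subseteq O(k,\mathbb{F}_q)$ finishes this direction. This argument is uniform even in the degenerate case of the zero sequence, where $m(x) = 1$, $d = 0$, and $\per(\a) = 1 = \ord(1) \in O(0,\mathbb{F}_q) \subseteq O(k,\mathbb{F}_q)$.

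For the reverse inclusion $O(k,\mathbb{F}_q) \subseteq P(k,\mathbb{F}_q)$, I would take $f(x)$ of degree $k$ and set $n = \ord(f(x))$. The natural move is to realize $f(x)$ as the characteristic polynomial of a companion matrix of the form~(\ref{eqn-special-matrix}), but the definition of the order set places no condition on the constant term $f(0)$, whereas the paper's standing convention requires the companion matrix to be invertible, i.e.\ $c_0 \neq 0$. To get around this I would write $f(x) = x^r g(x)$ with $g(0) \neq 0$ as in the definition of order, so that $\ord(f(x)) = \ord(g(x))$ and $\deg g(x) = k - r =: d$. Rescaling $g$ by the inverse of its leading coefficient leaves its order unchanged, since multiplying by a unit does not affect the divisibility condition $g(x) \mid x^n - 1$ defining the order; hence I may assume $g$ is monic of degree $d$ with $g(0) \neq 0$. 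Then $g$ is the characteristic polynomial of the companion matrix of a genuine degree-$d$ recurrence with $c_0 = -g(0) \neq 0$, and Corollary~\ref{corollary-period-poly-ord} produces a sequence of degree $d$ whose period equals $\ord(g(x)) = n$. Thus $n \in P(d,\mathbb{F}_q) \subseteq P(k,\mathbb{F}_q)$, with the degenerate case $d = 0$ forcing $n = 1$, which is realized by the zero sequence (of period $1$) satisfying any degree-$k$ recurrence.

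I expect the only genuine subtlety to be this constant-term issue in the second inclusion: the order set is insensitive to factors of $x$, while the period set, under the paper's standing assumption, only sees recurrences with invertible companion matrix. Once the order is rewritten through the $x$-free part $g(x)$ and the set-monotonicity is in hand, the construction of the impulse response sequence from Corollary~\ref{corollary-period-poly-ord} supplies the required realization, and both inclusions go through.
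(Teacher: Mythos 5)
Your proposal is correct and follows essentially the same route as the paper: the forward inclusion via the minimal polynomial, Theorem~\ref{theorem-period-minpoly}, and the monotonicity $O(d,\mathbb{F}_q) \subseteq O(k,\mathbb{F}_q)$; the reverse inclusion via Corollary~\ref{corollary-period-poly-ord}. The one place you diverge is worth highlighting: the paper's proof applies Corollary~\ref{corollary-period-poly-ord} directly to an arbitrary $f(x)$ of degree $k$, even though that corollary rests on the standing convention $c_0 = -f(0) \neq 0$ (the companion matrix must lie in $GL_k(\mathbb{F}_q)$), so as written it silently elides the case $f(0) = 0$, which genuinely occurs in $O(k,\mathbb{F}_q)$ since the order set is insensitive to factors of $x$. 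Your detour --- writing $f(x) = x^r g(x)$ with $g(0) \neq 0$, using the invariance of order under unit scaling and under factors of $x$, realizing $g$ by a degree-$d$ companion matrix with nonzero $c_0$, and then invoking $P(d,\mathbb{F}_q) \subseteq P(k,\mathbb{F}_q)$ --- cleanly fills this small gap, including the degenerate case $d = 0$ via the zero sequence. Note also that your appeal to the monotonicity of $P$ is safe under the paper's convention: the inductive step described before the lemma multiplies the characteristic polynomial by $x + 1$, which keeps the constant term nonzero. So your argument is not merely correct but slightly more careful than the paper's own proof at exactly the point you identified as the only subtlety.
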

\begin{proof}
We check containment in both directions. Let $n \in P(k,\mathbb{F}_q)$. Then $n = \per(\a)$ for some sequence $\a$ defined by a linear recurrence of degree $k$ over $\mathbb{F}_q$. Let $f(x)$ be the associated characteristic polynomial and let $m(x)$ be the minimal polynomial of~$\a$. Let $d = \deg m(x)$. Then $d \leq k$ by definition of the minimal polynomial. It follows from Theorem~\ref{theorem-period-minpoly} that
\[  n = \per(\a) = \ord(m(x)) \in  O(d,\mathbb{F}_q) \subseteq O(k,\mathbb{F}_q). \]
Thus $P(k,\mathbb{F}_q) \subseteq O(k,\mathbb{F}_q)$.

Now suppose $n \in O(k,\mathbb{F}_q)$. Then $n = \ord(f(x))$ for some $f(x) \in \mathbb{F}_q[x]$ with $\deg f(x) = k$. 
By Corollary~\ref{corollary-period-poly-ord}, there exists a sequence $\a$  satisfying the linear recurrence associated to $f(x)$ with $\per(\a) = \ord(f(x)) = n$, and so $n \in P(k,\mathbb{F}_q)$. Thus $O(k,\mathbb{F}_q) \subseteq P(k,\mathbb{F}_q)$. The desired set equality now follows.
\end{proof}

Before using Lemma~\ref{lemma-period-order} to give an explicit description of $P(k,\mathbb{F}_q)$ for small values of $k$, we introduce some more notation. Given $n \in \Zpos$, we let $D(n)$ denote the set of all positive integer divisors of $n$. For example, $D(6) = \{1, 2, 3, 6 \}$. Given an integer $a$ and subsets $S_1, S_2 \subseteq \mathbb{Z}$, we define $a S_1 = \{ a x \mid x \in S_1 \}$ and $S_1 S_2 = \{ x y \mid x \in S_1, y \in S_2 \}$. For example, $5 D(6) = \{5, 10, 15, 30\}$ and $D(2) D(6) = \{ 1, 2, 3, 4, 6, 12 \}$.

\begin{theorem}\label{theorem-main-contain}
Let $\mathbb{F}_q$ have prime characteristic $p$. For all $k \in \Zpos$, we have
\[  \bigcup_{i = 1}^k \{ p^j  \mid 0 \leq j \leq t_i\} D(q^i - 1)  \: \subseteq \: P(k, \mathbb{F}_q) \]
where $t_i = \lceil \log_p \lfloor k/i \rfloor \rceil = \min\{t \in \mathbb{Z} \mid p^t \geq \lfloor k/i \rfloor \}$ for $1 \leq i \leq k$.
\end{theorem}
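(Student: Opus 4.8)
The plan is to reduce everything to orders of polynomials and then construct, for each prescribed value $p^j d$, an explicit polynomial of degree at most $k$ realizing it as its order. By Lemma~\ref{lemma-period-order} we have $P(k,\mathbb{F}_q)=O(k,\mathbb{F}_q)$, and, as noted just before that lemma, $O(e,\mathbb{F}_q)\subseteq O(k,\mathbb{F}_q)$ whenever $e\le k$. Hence it suffices to fix a triple $(i,d,j)$ with $1\le i\le k$, $d\mid q^i-1$ and $0\le j\le t_i$, and to produce a polynomial $f(x)\in\mathbb{F}_q[x]$ with $\deg f\le k$ and $\ord(f(x))=p^j d$.

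First I would realize the divisor $d$ on its own as the order of an irreducible polynomial. Let $e$ be the multiplicative order of $q$ modulo $d$ (so $e=1$ when $d=1$; note $\gcd(q,d)=1$ because $d\mid q^i-1$). The standard fact that $d\mid q^i-1$ holds if and only if $e\mid i$ shows $e\mid i$, and in particular $e\le i$. Since $\mathbb{F}_{q^e}^\times$ is cyclic of order $q^e-1$ and $d\mid q^e-1$, I can choose $\alpha\in\mathbb{F}_{q^e}^\times$ of multiplicative order exactly $d$. Its minimal polynomial $g(x)$ over $\mathbb{F}_q$ is irreducible of degree $e$, has $g(0)\neq 0$ since $\alpha\neq 0$, and satisfies $\ord(g(x))=d$ by Remark~\ref{remark-ord-irred}.

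Next I would attach the factor $p^j$ by setting $f(x)=g(x)^b$ and invoking Theorem~\ref{theorem-ord-irred-power}, which gives $\ord(g(x)^b)=d\cdot p^{t(b)}$ with $t(b)=\min\{t\in\mathbb{Z}:p^t\ge b\}$. The crux is to choose $b$ so that $t(b)=j$ while respecting the degree budget $\deg f=eb\le k$, and this bookkeeping is the main obstacle. It is resolved precisely by the inequality $e\le i$: from $e\le i$ we get $\lfloor k/e\rfloor\ge\lfloor k/i\rfloor$, and monotonicity of the map $A\mapsto\min\{t:p^t\ge A\}$ gives $\min\{t:p^t\ge\lfloor k/e\rfloor\}\ge t_i\ge j$. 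Taking $b=\min(p^j,\lfloor k/e\rfloor)$ then forces $t(b)=j$: when $p^j\le\lfloor k/e\rfloor$ this is immediate, and when $p^j>\lfloor k/e\rfloor$ the bound $j\le t_i$ together with the minimality built into the definition of $t_i$ forces $j=t_i=\min\{t:p^t\ge\lfloor k/e\rfloor\}$, so that $p^{j-1}<\lfloor k/e\rfloor\le p^j$ and again $t(b)=j$. In either case $eb\le e\lfloor k/e\rfloor\le k$.

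Finally, for this choice of $b$, Theorem~\ref{theorem-ord-irred-power} yields $\ord(f(x))=d\,p^j$ while $\deg f=eb\le k$, so $p^j d\in O(eb,\mathbb{F}_q)\subseteq O(k,\mathbb{F}_q)=P(k,\mathbb{F}_q)$. Ranging over all admissible triples $(i,d,j)$ produces every element of the left-hand union, giving the desired containment. I expect the only genuinely delicate point to be confirming that $b=\min(p^j,\lfloor k/e\rfloor)$ lands in the interval $(p^{j-1},p^j]$ in the boundary case $j=t_i$, which is exactly where the minimality in the definition of $t_i$ is used.
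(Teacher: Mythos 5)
Your proposal is correct and follows essentially the same route as the paper: reduce to order sets via Lemma~\ref{lemma-period-order}, take the minimal polynomial $g(x)$ of an element $\alpha \in \mathbb{F}_{q^i}^\times$ of order $d$, and set $f(x) = g(x)^b$ so that Theorem~\ref{theorem-ord-irred-power} contributes the factor $p^j$, with the minimality of $t_i$ supplying the degree bound $\deg f \le k$. The only differences are bookkeeping: the paper bounds $\deg g \le i$ and takes the minimal admissible exponent $b = p^{j-1}+1$ (or $b=1$ when $j=0$), whereas you compute $\deg g = e$ exactly as the order of $q$ modulo $d$ and take $b = \min\bigl(p^j, \lfloor k/e \rfloor\bigr)$, both of which verify the same two conditions $t(b)=j$ and $eb \le k$.
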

\begin{proof}
To prove the containment, let $1 \leq i \leq k$ and consider $n = p^j d$ with $0 \leq j \leq t_i$ and $d \in D(q^i - 1)$. We show that $n \in P(k, \mathbb{F}_q)$ by constructing an appropriate polynomial $f(x)$ with $\ord(f(x)) = n$. 

Since the multiplicative group of $K = \mathbb{F}_{q^i}$ is cyclic of order $q^i - 1$ and $d$ divides $q^i - 1$, we can find an element $\alpha \in K$ of order $d$. Let $g(x) \in \mathbb{F}_q[x]$ be the minimal polynomial of $\alpha$ over $\mathbb{F}_q$. Then $g(x)$ is irreducible with $\deg g(x) \leq i$ and $\ord(g(x)) = d$ by Remark~\ref{remark-ord-irred}.
Now set $f(x) = g(x)^b$ with $b = 1$ if $j = 0$ and $b = p^{j-1} + 1$ if $j \geq 1$. Note that in both cases, $j$ is the smallest integer such that $p^j \geq b$. Then $\ord(f(x)) = \ord(g(x)) p^j = d p^j = n$ by Theorem~\ref{theorem-ord-irred-power}, and 
\[ \deg f(x) = b \deg g(x) \leq  (p^{j-1} + 1) i \leq (p^{t_i - 1} + 1) i \leq \lfloor k/i \rfloor i \leq k.
\]
The inequality $p^{t_i - 1} + 1 \leq \lfloor k/i \rfloor$ used in the second-last step above follows directly from the definition of $t_i$. We conclude that $n \in O(k,\mathbb{F}_q) = P(k,\mathbb{F}_q)$ by Lemma~\ref{lemma-period-order}.
\end{proof}

\begin{remark}
The exponent bounds $t_i$ and corresponding sets of prime powers appearing in Theorem~\ref{theorem-main-contain} are often small. Observe that $t_i \geq 0$ for $1 \leq i \leq k$ and $t_i = 0$ occurs for $\lfloor k/2 \rfloor + 1 \leq i \leq k$. Since the largest value of the floor function $\lfloor k/i \rfloor$ occurs when $i = 1$, it follows that for $p > k$, we have $\{ p^j  \mid 0 \leq j \leq t_i\}  = \{1, p \}$ for $1 \leq i \leq \lfloor k/2 \rfloor$. Larger sets of prime powers can occur when $k$ is large relative to $p$.
\end{remark}

We now come to our main result.

\begin{theorem}  \label{theorem-period-sets}
Let $\mathbb{F}_q$ have prime characteristic $p$. For $1 \leq k \leq 4$, the set containment in Theorem~\ref{theorem-main-contain} is actually an equality. In particular, we have
\begin{itemize}
\item[(i)] $P(1, \mathbb{F}_q) = D(q - 1)$.
\item[(ii)] $P(2, \mathbb{F}_q) = D(q^2 - 1) \cup p D(q-1)$.
\item[(iii)] $\displaystyle P(3, \mathbb{F}_q) =
\begin{cases}
\displaystyle  D(q^3 - 1) \cup D(q^2 - 1) \cup p D(q - 1), &\text{if $p \geq 3$.}\\[15 pt]
\displaystyle  D(q^3 - 1) \cup D(q^2 - 1) \cup \{2, 4\} D(q - 1), &\text{if $p = 2$.}
\end{cases}$ \\[5 pt]
\item[(iv)] $\displaystyle P(4, \mathbb{F}_q) = 
\begin{cases}
\displaystyle  D(q^4 - 1) \cup D(q^3 - 1) \cup p  D(q^2 - 1), &\text{if $p \geq 5$.}\\[15 pt]
\displaystyle  D(q^4 - 1) \cup D(q^3 - 1) \cup p D(q^2 - 1) \cup p^2 D(q - 1), &\text{if $p = 2, 3$.}
\end{cases}$
\end{itemize}
\end{theorem}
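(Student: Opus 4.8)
The plan is to invoke Lemma~\ref{lemma-period-order} to replace $P(k,\mathbb{F}_q)$ by the order set $O(k,\mathbb{F}_q)$, and then---since the containment ``$\supseteq$'' is already furnished by Theorem~\ref{theorem-main-contain}---to establish only the reverse containment $O(k,\mathbb{F}_q) \subseteq (\text{stated set})$ for each $k$ with $1 \le k \le 4$. Concretely, I would take an arbitrary $f(x) \in \mathbb{F}_q[x]$ of degree $k$ and show $\ord(f(x))$ lands in the appropriate union. Since $\ord$ is unchanged by stripping off a factor $x^r$, I may assume $f(0) \ne 0$; this only lowers the degree, and the containments $O(j,\mathbb{F}_q) \subseteq O(k,\mathbb{F}_q)$ noted before the lemma let me treat all degrees up to $k$ uniformly.

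The engine of the argument is the reduction to the irreducible case. Factoring $f(x) = \prod_j g_j(x)^{b_j}$ into distinct irreducibles $g_j$ of degree $d_j$ with $\sum_j d_j b_j \le k$, Theorem~\ref{theorem-ord-prod} gives $\ord(f(x)) = \lcm_j \ord(g_j^{b_j})$, and Theorem~\ref{theorem-ord-irred-power} evaluates each factor as $\ord(g_j^{b_j}) = e_j\, p^{t_j}$, where $e_j = \ord(g_j)$ divides $q^{d_j}-1$ by Remark~\ref{remark-ord-irred} and $t_j = \min\{t : p^t \ge b_j\}$. Thus the problem becomes a finite case analysis over the factorization types, i.e.\ the multisets of pairs $(d_j, b_j)$ whose weighted sum is at most $k$.

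Two elementary number-theoretic facts handle the bookkeeping. First, $i \mid m$ implies $q^i - 1 \mid q^m - 1$, so $D(q^i-1) \subseteq D(q^m-1)$; this collapses the contributions from lower-degree factors into the larger divisor sets (e.g.\ $D(q-1) \subseteq D(q^2-1) \subseteq D(q^4-1)$ and $D(q-1) \subseteq D(q^3-1)$). Second, since $p \mid q$ we have $\gcd(p, q^i-1) = 1$, which lets me pull every power of $p$ outside the lcm: if $e, e'$ both divide some $q^i - 1$, then $\lcm(p^a e, e') = p^a \lcm(e,e')$ with $\lcm(e,e') \mid q^i - 1$. Applying these, each factorization type yields an order of the form $p^a d$ with $d \mid q^i - 1$ for the appropriate $i$, and one checks directly that this lies in the stated set.

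The step I expect to be the main obstacle is the characteristic-dependent bookkeeping of the power $p^a$, which is precisely what produces the case splits in parts (iii) and (iv). The exponent is governed by the largest $t_j = \min\{t : p^t \ge b_j\}$ appearing, so it is the repeated linear factors $(x-a)^3$ and $(x-a)^4$ that behave differently for small $p$: for instance $(x-a)^4$ gives $t = 1$ when $p \ge 5$ but $t = 2$ when $p = 2$ or $p = 3$, which is exactly why the extra term $p^2 D(q-1)$ must be adjoined in those characteristics. Carefully enumerating all factorization types for $k = 4$---ranging from a single irreducible quartic down to a fourth power of a linear factor---and matching each resulting $p^a D(q^i-1)$ against the bounds $t_i = \lceil \log_p \lfloor k/i \rfloor \rceil$ is the most laborious portion, but each individual verification is routine given the two facts above.
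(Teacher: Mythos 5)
Your proposal is correct and follows essentially the same route as the paper: both reduce to $O(k,\mathbb{F}_q)$ via Lemma~\ref{lemma-period-order}, then run a case analysis over factorization types using Theorems~\ref{theorem-ord-prod} and~\ref{theorem-ord-irred-power}, with the divisibility $D(q^i-1) \subseteq D(q^m-1)$ for $i \mid m$ and the coprimality of $p$ with $q^i-1$ doing the bookkeeping, and with the repeated linear factors $g(x)^3$, $g(x)^4$ producing the small-characteristic case splits. Your explicit observations that one may strip off $x^r$ and that the $p$-exponent of the lcm is $\max_j t_j$ are sound and merely make precise what the paper leaves implicit.
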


\begin{proof}
It is straightforward to verify that the set expressions appearing on the right above match the one appearing in  Theorem~\ref{theorem-main-contain} for
$1 \leq k \leq 4$. Note that some simplification has been carried out. For instance, if $i$ divides $j$, then $q^i - 1$ divides $q^j - 1$ and one then has $D(q^i - 1) \subseteq D(q^j - 1)$. It follows that if both of these occur in a set union then the first is redundant and can be omitted. In particular, $D(q - 1)$ has been omitted from most of the expressions above. $D(q^2 - 1)$ has also been omitted when $D(q^4 - 1)$ occurs.

Since  Theorem~\ref{theorem-main-contain} shows containment in one direction, we will establish equality by checking the reverse containment for each value $1 \leq k \leq 4$. We have $P(k, \mathbb{F}_q) = O(k,\mathbb{F}_q)$ by Lemma~\ref{lemma-period-order}, so it will suffice to verify that $\ord(f(x))$ belongs to the given set union for all $f(x)$ with $\deg f(x) = k$. We will make frequent use of the fact that if $g(x) \in \mathbb{F}_q[x]$ is irreducible and  $\deg g(x) = d$, then $\ord(g(x)) \in D(q^d - 1)$ which follows from Remark~\ref{remark-ord-irred}. We also note that the order of a polynomial is invariant under multiplication by nonzero scalars. Thus we can restrict our attention to monic polynomials and will also assume, without loss of generality, that all factors are monic when working with factorizations of such polynomials.

If $\deg f(x) = 1$, then $f(x)$ must be irreducible  so $\ord(f(x)) \in D(q-1)$ as noted above. This establishes part~(i). 

If $\deg f(x) = 2$, then either $f(x)$ is irreducible, in which case we have $\ord(f(x)) \in D(q^2 - 1)$, or we have a factorization $f(x) = g_1(x) g_2(x)$ with $\deg g_1(x) = \deg g_2(x) = 1$. There are then two cases to consider. If the factors are distinct (not associates), then we see that $\ord(f(x))$ is the least common multiple of $\ord(g_1(x))$ and $\ord(g_2(x))$ by Theorem~\ref{theorem-ord-prod}. Since both orders divide $q-1$, it follows that $\ord(f(x)) \in D(q-1)$. On the other hand, if $g_1(x)$ and $g_2(x)$ are associates, then they must be equal since we are assuming all factors are monic. Thus we have $f(x) = g_1(x)^2$. Applying Theorem~\ref{theorem-ord-irred-power}, we see that $\ord(f(x)) = p \, \ord(g_1(x))
\in p D(q - 1)$ since $p = p^1 \geq 2$ for all $p$. Combining the above cases, we have thus established that if $\deg f(x) = 2$, then
\[ \ord(f(x)) \in D(q^2 - 1)  \cup D(q-1) \cup p D(q-1)  = D(q^2 - 1) \cup p D(q-1).  \]
This establishes part~(ii).

The same sort of arguments are used to handle the remaining values $k = 3,4$ and we now outline the main subcases. If $\deg f(x) = 3$, then one of the following must hold:
\begin{itemize}
\item $f(x)$ is irreducible. In this case, we have $\ord(f(x)) \in D(q^3 - 1)$.
\item $f(x) = g(x) h(x)$ with $g(x)$, $h(x) \in \mathbb{F}_q[x]$ irreducible and $\deg g(x) = 2$ and $\deg h(x) = 1$. Then $\ord(g(x)) \in D(q^2 - 1)$ and $\ord(h(x)) \in D(q - 1)$. Since $q - 1$ divides $q^2 - 1$, we see that $\ord(f(x)) \in D(q^2 - 1)$.
\item $f(x) = g_1(x) g_2(x) g_3(x)$ with $\deg g_1(x) = \deg g_2(x) = \deg g_3(x) = 1$ and the factors pairwise distinct. We see that $\ord(g_i(x)) \in D(q - 1)$ for $i = 1, 2, 3$. Since $D(q-1)$ is closed under least common multiples, we have $\ord(f(x)) \in D(q - 1)$.
\item $f(x) = g_1(x)^2 g_2(x)$ with $\deg g_1(x) = \deg g_2(x) = 1$ and $g_1(x)$ distinct from $g_2(x)$. We see that $\ord(g_1(x)^2) \in p D(q-1)$ and $\ord(g_2(x)) \in D(q-1)$. Since $D(q-1)$ is closed under least common multiples, we have $\ord(f(x)) \in p D(q-1)$.
\item $f(x) = g(x)^3$ with $\deg g(x) = 1$. We have $\ord(g(x)) \in D(q-1)$. The smallest value of $t$ such that $p^t \geq 3$ is $t = 1$ if $p \geq 3$ and $t = 2$ if $p = 2$. It follows that $\ord(f(x)) \in p D(q-1)$ if $p \geq 3$, and $\ord(f(x)) \in 4 D(q - 1)$ if $p = 2$.
\end{itemize}
Combining the above, we see that if $\deg f(x) = 3$ and $p \geq 3$, then
\begin{eqnarray*}
 \ord(f(x)) &\in& D(q^3 - 1) \cup D(q^2 - 1) \cup D(q - 1) \cup p D(q - 1) \\
 &=& D(q^3 - 1) \cup D(q^2 - 1) \cup p D(q - 1) 
\end{eqnarray*}
If $p = 2$, then $4 D(q-1)$ must also be included in the union. This establishes part~(iii).

Finally, if $\deg f(x) = 4$, then one of the following must hold:
\begin{itemize}
\item $f(x)$ is irreducible. Then $\ord(f(x)) \in D(q^4 - 1)$.
\item $f(x)$ is the product of degree $3$ and degree $1$ irreducible polynomials. Since $q - 1$ divides $q^3 - 1$, we see that $\ord(f(x)) \in D(q^3 - 1)$.
\item $f(x)$ is the product of two irreducible quadratics. One must consider both the case where the quadratic factors are distinct and also the case where one is repeated. Combining, one sees that $\ord(f(x)) \in \{1, p\} D(q^2 - 1)$.
\item $f(x)$ is the product of an irreducible quadratic and two degree $1$ polynomials (where the latter might be repeated). Then $\ord(f(x)) \in
\{1, p \} D(q^2 - 1)$. In deriving this statement, note that the least common multiple of an element in $D(q^2 - 1)$ and $p D(q - 1)$ will lie in $p D(q^2 - 1)$. 
\item $f(x)$ is the product of $4$ linear factors. These could all be distinct or there could be some repetition. The cases where $f(x)$ includes a repeated factor $g(x)^3$ or $g(x)^4$ are the ones where the behavior is slightly different for small $p$. In particular, the smallest value of $t$ such that $p^t \geq 4$ is $t = 1$ for $p \geq 5$ and $t = 2$ for $p = 2, 3$. Analyzing all the cases, we see that if $p \geq 5$, then $\ord(f(x)) \in \{1, p\} D(q-1)$, and if $p = 2,3$, then 
$\ord(f(x)) \in \{1, p, p^2\} D(q-1)$.
\end{itemize}
Combining the above, we see that if $\deg f(x) = 4$ and $p \geq 5$, then
\begin{eqnarray*}
 \ord(f(x)) &\in& D(q^4 - 1) \cup D(q^3 - 1) \cup \{1, p\} D(q^2 - 1) \cup \{1, p\} D(q - 1) \\
 &=& D(q^4 - 1) \cup D(q^3 - 1) \cup p  D(q^2 - 1).
\end{eqnarray*}
If $p = 2,3$, then $p^2 D(q-1)$ must also be included in the union. This establishes part~(iv) and completes the proof.
\end{proof}

\begin{example} Consider the field $\mathbb{F}_2$ so $q = p = 2$. Using Theorem~\ref{theorem-period-sets}, we can easily compute all possible periods for linear recurrence sequences of degree $k \leq 4$ over this field. We have:
\begin{align*}
P(1, \mathbb{F}_2) &= D(1) = \{ 1 \}. \\
P(2, \mathbb{F}_2) &= D(3) \cup 2 D(1) = \{1, 2, 3 \}. \\
P(3, \mathbb{F}_2) &= D(7) \cup D(3) \cup \{2, 4\} D(1) = \{1, 2, 3, 4, 7 \}. \\
P(4, \mathbb{F}_2) &= D(15) \cup D(7) \cup 2 D(3) \cup 4 D(1) = \{1, 2, 3, 4, 5, 6, 7, 15 \}.
\end{align*}
\end{example}

\begin{remark}
It is relatively easy to find examples showing that the containment in Theorem~\ref{theorem-main-contain} can be strict once $k \geq 5$. For example, the polynomials $g(x) = x^2 + x + 1$ and $h(x) = x^3 + x + 1$ are primitive over $\mathbb{F}_2$ and so achieve the maximum possible order $2^k - 1$ relative to their degree $k$ as discussed in Corollary~\ref{corollary-max-period-finite-field}. If we set $f(x) = g(x) h(x) = x^5 + x^4 + 1 = x^5 - x^4 - 1$, then $\ord(f(x)) = \lcm(\ord(g(x)), \ord(h(x))) = \lcm(3, 7) = 21$. It follows that the impulse response sequence defined by the corresponding linear recurrence 
\[ a_5 = a_4 + a_0 \]
has period $21$. This can also be verified directly by computing the sequence
\[ \underline{0, 0, 0, 0, 1}, 1, 1, 1, 1, 0, 1, 0, 1, 0, 0, 1, 1, 0, 0, 0, 1, \underline{0, 0, 0, 0, 1}, \ldots \]
Observe that $21 \notin D(2^k - 1)$ for $1 \leq k \leq 5$ which demonstrates that the containment in Theorem~\ref{theorem-main-contain} is strict when $k = 5$ and $q = p = 2$. 
\end{remark}

\section{Linear recurrences and period sets over other finite rings}

Let $q$ be a power of the prime $p$. The finite field $\mathbb{F}_q$ can be constructed as a quotient $\mathbb{F}_p[t]/\langle f(t) \rangle$ where $f(t) \in \mathbb{F}_p[t]$ is irreducible. In this section, we consider sequences defined by linear recurrences over a broader class of finite commutative rings, some of which arise by weakening this assumption on $f(t)$. 

Let $f(t) = \prod_{i=1}^r f_i(t)$ with $f_i(t) \in \mathbb{F}_p[t]$ monic and  irreducible for $1 \leq i \leq r$.  Define $\mathcal{R} = \mathbb{F}_p[t]/\langle f(t) \rangle$. 
If we assume that the factors $f_i(t)$ are pairwise distinct (hence relatively prime), then we can apply the Chinese Remainder Theorem to obtain a ring isomorphism
\[  \mathcal{R} \cong \mathbb{F}_p / \langle f_1(t) \rangle \oplus \ldots \oplus \mathbb{F}_p / \langle f_r(t) \rangle \cong \mathbb{F}_{p^{d_1}} \oplus \ldots \oplus \mathbb{F}_{p^{d_r}} 
\]
where $d_i = \deg f_i(t)$ for $1 \leq i \leq r$. The induced projection $\proj_i$ from $\mathcal{R}$ onto the $i$th component $\mathbb{F}_p / \langle f_i(t) \rangle \cong \mathbb{F}_{p^{d_i}}$ is given explicitly by 
\[   g(t) + \langle f(t) \rangle  \longmapsto  g(t) + \langle f_i(t) \rangle. \]
More generally, we will consider rings of the form $\mathcal{R} = \mathbb{F}_{q_1} \oplus \ldots \oplus \mathbb{F}_{q_r}$ in which the finite fields in different components are not required to have the same characteristic.

Suppose we have a sequence $\a = (a_j)_{j=1}^\infty$ defined over such a ring $\mathcal{R}$. Applying the projection maps $\proj_i: \mathcal{R} \rightarrow \mathbb{F}_{q_i}$, we obtain $r$ sequences $\proj_i(\a) := (\proj_i(a_j))_{j=1}^\infty$, each one defined over the corresponding component field $\mathbb{F}_{q_i}$. Conversely, given sequences $(a_j^{(i)})_{j=1}^\infty$ defined over $\mathbb{F}_{q_i}$ for $1 \leq i \leq r$, we can form $r$-tuples to obtain a sequence $\a = (a_j^{(1)}, \ldots, a_j^{(r)})_{j=1}^\infty$ defined over $\mathcal{R}$. The following lemma is easily verified.

\begin{lemma}
The sequence $\a$ is periodic if and only if $\proj_i(\a)$ is periodic for all $1 \leq i \leq r$. When $\a$ is periodic, we have
\[  \per(\a) = \lcm(\per(\proj_1(\a)), \ldots, \per(\proj_r(\a))). \]
\end{lemma}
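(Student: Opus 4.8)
The plan is to reduce everything to the fact, recalled in Section~\ref{section-introduction}, that a sequence is $m$-periodic if and only if its period divides $m$, together with the observation that equality in the product ring $\mathcal{R} = \F_{q_1} \oplus \ldots \oplus \F_{q_r}$ is checked componentwise. Writing $a_j = (a_j^{(1)}, \ldots, a_j^{(r)})$ so that $\proj_i(\a) = (a_j^{(i)})_j$, the defining relation $a_{n+m} = a_n$ holds in $\mathcal{R}$ exactly when $a_{n+m}^{(i)} = a_n^{(i)}$ holds in $\F_{q_i}$ for every $i$.

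First I would establish the clean intermediate claim that, for a fixed positive integer $m$, the sequence $\a$ is $m$-periodic if and only if $\proj_i(\a)$ is $m$-periodic for all $i$. The forward direction is immediate: if $a_{n+m} = a_n$ for all $n \geq N$, then projecting gives $a_{n+m}^{(i)} = a_n^{(i)}$ for all $n \geq N$ and each $i$. For the converse, if $\proj_i(\a)$ is $m$-periodic with threshold $N_i$ for each $i$, then taking $N = \max_i N_i$ gives $a_{n+m}^{(i)} = a_n^{(i)}$ for all $i$ and all $n \geq N$, and reassembling the components yields $a_{n+m} = a_n$ for $n \geq N$. This claim immediately gives the first assertion of the lemma: $\a$ is periodic (i.e.\ $m$-periodic for some $m$) if and only if every $\proj_i(\a)$ is periodic, since in the ``if'' direction one may take $m$ to be any common period of the components.

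Granting that $\a$ is periodic, I would then derive the period formula by combining the intermediate claim with the divisibility characterization applied in each component. Chaining the equivalences gives, for every positive integer $m$,
\[
\a \text{ is } m\text{-periodic} \iff \per(\proj_i(\a)) \mid m \text{ for all } i \iff \lcm\bigl(\per(\proj_1(\a)), \ldots, \per(\proj_r(\a))\bigr) \mid m,
\]
where the last step is just the defining property of the least common multiple as the generator of the set of common multiples. Setting $L = \lcm(\per(\proj_1(\a)), \ldots, \per(\proj_r(\a)))$, this shows that the integers $m$ for which $\a$ is $m$-periodic are precisely the positive multiples of $L$; the smallest of these is $L$ itself, so $\per(\a) = L$ as claimed.

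I do not expect any genuine obstacle here: the content is entirely formal once one has the componentwise description of equality in $\mathcal{R}$ and the divisibility fact. The only points requiring a little care are the bookkeeping with the thresholds $N_i$ in the converse direction of the intermediate claim (handled by passing to their maximum), and ensuring that the period formula is stated and used only under the standing hypothesis that $\a$ is periodic, so that each $\per(\proj_i(\a))$ --- and hence $L$ --- is in fact defined.
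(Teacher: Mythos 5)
Your proof is correct, and it is essentially the argument the paper has in mind: the paper gives no proof at all (the lemma is introduced with ``The following lemma is easily verified''), and your componentwise reduction of $m$-periodicity together with the divisibility characterization $\a$ is $m$-periodic $\iff \per(\a) \mid m$ is exactly the routine verification being alluded to. The threshold bookkeeping via $N = \max_i N_i$ is the only detail worth writing down, and you handled it.
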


Some straightforward algebraic manipulations show that the sequence $\a$ is defined by a linear recurrence of degree~$k$ over $\mathcal{R}$ if and only if this holds for all of the component sequences $\proj_i(\a)$. The recurrence equations for the latter are obtained by simply applying the projection maps to the coefficients of the recurrence equation for~$\a$. 

We will continue to restrict attention to linear recurrence equations in which the lowest indexed coefficient $c_0$ is a unit. This condition holds for $c_0 \in \mathcal{R}$ if and only if $\proj_i(c_0)$ is a unit in $\mathbb{F}_{q_i}$ for all $i$, equivalently $\proj_i(c_0) \neq 0$ for all $i$. Recall from Section~\ref{section-introduction} that sequences defined by such a linear recurrence of degree $k$ over $\mathcal{R}$ will be periodic with $\per(\a)$ bounded above by $|\mathcal{R}|^k - 1$. As we will see shortly, this upper bound on the period is not always attained.

Extending the period set notation introduced in the previous section and using the lemma and other observations above, we have the following result.
\begin{lemma}\label{corollary-algebra-periods}
Let $\mathcal{R} = \mathbb{F}_{q_1} \oplus \ldots \oplus \mathbb{F}_{q_r}$ and  $k \geq 1$. Then
\[ P(k, \mathcal{R}) = \{ \lcm(\omega_1,\ldots,\omega_r) \mid \omega_i \in P(k,\mathbb{F}_{q_i}) \: \mathrm{for} \: 1 \leq i \leq r \}. \]
\end{lemma}

\begin{example}
Let $\mathcal{R} = \mathbb{F}_2 \oplus \mathbb{F}_3 \oplus \mathbb{F}_5$. First consider $k = 1$. Using Theorem~\ref{theorem-period-sets}, we see that:
\begin{align*}
P(1, \mathbb{F}_2) &= D(1) = \{ 1 \}. \\
P(1, \mathbb{F}_3) &= D(2) = \{ 1, 2 \}. \\
P(1, \mathbb{F}_5) &= D(4) = \{ 1, 2, 4 \}. 
\end{align*}
It follows by Lemma~\ref{corollary-algebra-periods} that
\begin{align*}
P(1, \mathcal{R}) &= \{ \lcm(\omega_1,\omega_2, \omega_3) \mid \omega_1 \in P(1, \mathbb{F}_2), \omega_2 \in P(1, \mathbb{F}_3), \omega_3 \in P(1, \mathbb{F}_5) \} \\
&= \{1, 2, 4 \}.
\end{align*}
When $k = 2$, 
\begin{align*}
P(2, \mathbb{F}_2) &= D(3) \cup 2D(1) = \{ 1, 2, 3 \}. \\
P(2, \mathbb{F}_3) &= D(8) \cup 3D(2) = \{ 1, 2, 3, 4, 6, 8 \}. \\
P(2, \mathbb{F}_5) &= D(24) \cup 5D(4) = \{ 1, 2, 3, 4, 5, 6, 8, 10, 12, 20, 24 \}. 
\end{align*}
By Lemma~\ref{corollary-algebra-periods}, we then have
\begin{align*}
P(2, \mathcal{R}) &= \{ \lcm(\omega_1,\omega_2, \omega_3) \mid \omega_1 \in P(2, \mathbb{F}_2), \omega_2 \in P(2, \mathbb{F}_3), \omega_3 \in P(2, \mathbb{F}_5) \} \\
&= \{ 1, 2, 3, 4, 5, 6, 8, 10, 12, 15, 20, 24, 30, 40, 60, 120 \}.
\end{align*}
We will skip over $k = 3$ and $k = 4$ since our main point is to demonstrate how easily one can obtain the full period sets without doing a brute force enumeration of all possible combinations of linear recurrence equations and initial states. Even for $k = 2$, there are $8 \cdot 30 = 240$ choices for the coefficients $(c_0,c_1)$ with $c_0$ a unit, and each of these can be paired with $30^2 = 900$ different initial states $s_0 = (a_0, a_1)$.
\end{example}

\begin{remark}
By the Chinese Remainder Theorem, we have $\mathbb{F}_2 \oplus \mathbb{F}_3 \oplus \mathbb{F}_5 \cong \mathbb{Z}_{30}$ so the calculations in the preceding example can be viewed as determining the period sets for linear recurrences of small degree defined over $\mathbb{Z}_{30}$. More generally, the methods in this paper can be applied to find period sets of linear recurrences defined over $\mathbb{Z}_n$ for any $n$ that decomposes as a product of distinct prime numbers. The periods of sequences in the case where $n$ includes nontrivial prime powers have also been investigated. We refer the reader to~\cite{P, W} for more details.
\end{remark}

We have the following upper bound on the periods of sequences defined over such rings.

\begin{lemma}\label{corollary-algebra-max-period}
Let $\mathcal{R} = \mathbb{F}_{q_1} \oplus \ldots \oplus \mathbb{F}_{q_r}$ and $k \geq 1$. Let $\a$ be a sequence defined by a linear recurrence of degree $k$ over $\mathcal{R}$. Then 
\[ \per(\a) \leq \prod_{i=1}^r (q_i^k - 1).\]
\end{lemma}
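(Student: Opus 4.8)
The plan is to reduce the problem to the single-field case that has already been analyzed, and then to recombine the resulting bounds using a least-common-multiple estimate. First I would invoke the periodicity lemma stated just above (the one expressing the period of $\a$ in terms of its projections). Since $c_0$ is a unit in $\mathcal{R}$, each projection $\proj_i(c_0)$ is a unit, and in particular nonzero, in $\F_{q_i}$, so every component sequence $\proj_i(\a)$ satisfies a degree-$k$ linear recurrence over $\F_{q_i}$ whose lowest-indexed coefficient is nonzero. Hence each $\proj_i(\a)$ is periodic, and
\[ \per(\a) = \lcm\bigl(\per(\proj_1(\a)), \ldots, \per(\proj_r(\a))\bigr). \]

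Next I would bound each factor individually. By the pigeonhole argument recalled in Section~\ref{section-introduction}, any sequence over $\F_{q_i}$ satisfying a degree-$k$ recurrence that is not identically zero has period at most $q_i^k - 1$, because the repeating cycle cannot include the zero state vector; the only remaining possibility is the zero sequence itself, whose period $1$ also respects this bound since $q_i \geq 2$. Therefore $\per(\proj_i(\a)) \leq q_i^k - 1$ for every $1 \leq i \leq r$.

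Finally I would combine these estimates using the elementary fact that the least common multiple of finitely many positive integers divides, and hence is no larger than, their product. This gives
\[ \per(\a) = \lcm\bigl(\per(\proj_1(\a)), \ldots, \per(\proj_r(\a))\bigr) \leq \prod_{i=1}^r \per(\proj_i(\a)) \leq \prod_{i=1}^r (q_i^k - 1), \]
which is the desired inequality. (One could equally extract the same conclusion from Lemma~\ref{corollary-algebra-periods} together with the single-field bound of Corollary~\ref{corollary-max-period-finite-field}.)

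I do not expect a serious obstacle here: once the periodicity lemma and the single-field period bound are in place, the argument is essentially bookkeeping. The only point needing a little care is confirming that the component periods really are bounded by $q_i^k - 1$ in every case, including the degenerate situation where some $\proj_i(\a)$ is the zero sequence, before applying the $\lcm$-divides-product inequality. It is also worth observing that the bound need not be sharp, since equality in the $\lcm$ step requires the component periods to be pairwise coprime; this is consistent with the remark that the upper bound $|\mathcal{R}|^k - 1$ is not always attained.
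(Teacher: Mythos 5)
Your proposal is correct and takes essentially the same route as the paper: both reduce via the projection lemma to $\per(\a) = \lcm\bigl(\per(\proj_1(\a)),\ldots,\per(\proj_r(\a))\bigr)$, bound each component period by $q_i^k - 1$, and finish with the fact that the least common multiple is at most the product. Your explicit handling of the zero-sequence case is a harmless refinement that the paper leaves implicit.
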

\begin{proof}
We have $\per(\a) = \lcm(\omega_1,\ldots,\omega_r) \leq \prod_{i=1}^r \omega_i$ with  $\omega_i \in P(k,\mathbb{F}_{q_i})$ for $1 \leq i \leq r$. Since $\omega_i \leq q_i^k - 1$ for all $i$, the result follows.
\end{proof}

One easy consequence of this upper bound is the following characterization of the fields among such rings. Note that $\mathcal{R} = \mathbb{F}_{q_1} \oplus \ldots \oplus \mathbb{F}_{q_r}$ is not a field if $r > 1$ due to the presence of zero divisors.

\begin{theorem}
Let $\mathcal{R} = \mathbb{F}_{q_1} \oplus \ldots \oplus \mathbb{F}_{q_r}$ and $k \geq 1$. Among all sequences $\a$ defined by linear recurrences of degree $k$ over $\mathcal{R}$, the maximum period $|\mathcal{R}|^k - 1$ is achieved if and only if  $\mathcal{R}$ is a field ($r = 1$).
\end{theorem}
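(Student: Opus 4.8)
The plan is to prove the two directions of the biconditional separately, with one direction essentially immediate from earlier results and the other reducing to a short elementary inequality. Note first that $|\mathcal{R}| = q_1 q_2 \cdots q_r$, so the pigeonhole bound is $|\mathcal{R}|^k - 1 = \prod_{i=1}^r q_i^k - 1$.

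For the forward direction, suppose $\mathcal{R}$ is a field, so $r = 1$ and $\mathcal{R} = \mathbb{F}_{q_1}$. Then Corollary~\ref{corollary-max-period-finite-field} directly supplies a degree-$k$ linear recurrence sequence over $\mathbb{F}_{q_1}$ with period $q_1^k - 1 = |\mathcal{R}|^k - 1$, so the maximum is attained. For the reverse direction, I would show the contrapositive: when $r > 1$, the upper bound from Lemma~\ref{corollary-algebra-max-period} already lies strictly below $|\mathcal{R}|^k - 1$, so no sequence can reach the pigeonhole bound. Concretely, every degree-$k$ sequence $\a$ over $\mathcal{R}$ satisfies $\per(\a) \leq \prod_{i=1}^r (q_i^k - 1)$, and it suffices to establish the strict inequality
\[
\prod_{i=1}^r (q_i^k - 1) \;<\; \prod_{i=1}^r q_i^k - 1 \qquad (r \geq 2).
\]
Writing $x_i = q_i^k$ and observing that each $x_i \geq 2$ (since $q_i \geq 2$ and $k \geq 1$), I would prove $\prod_{i=1}^r (x_i - 1) < \prod_{i=1}^r x_i - 1$ by induction on $r$. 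The base case $r = 2$ follows from the identity $(x_1 - 1)(x_2 - 1) = x_1 x_2 - (x_1 + x_2) + 1$, whose right-hand side is smaller than $x_1 x_2 - 1$ by $x_1 + x_2 - 2 > 0$. The inductive step multiplies the induction hypothesis by the factor $x_{r+1} - 1 \geq 1$, which preserves strict inequality, and then absorbs the remaining positive cross terms.

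Combining the two directions gives the result: for $r > 1$ we obtain $\per(\a) \leq \prod_{i=1}^r (q_i^k - 1) < |\mathcal{R}|^k - 1$ for every such $\a$, so the maximum is not achieved, while for $r = 1$ it is. The only substantive step is the strict inequality, which is routine but should be checked carefully; the crucial point is that each factor $x_i = q_i^k \geq 2$, so the cross terms discarded when passing from $\prod(x_i - 1)$ to $\prod x_i - 1$ are strictly positive. I do not anticipate any genuine obstacle here, since the achievability needed in the field case is handed to us by Corollary~\ref{corollary-max-period-finite-field} and the non-field case never requires exhibiting a sequence at all.
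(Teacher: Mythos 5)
Your proposal is correct and takes essentially the same route as the paper: the field case via Corollary~\ref{corollary-max-period-finite-field}, and the non-field case via the bound of Lemma~\ref{corollary-algebra-max-period} combined with the strict inequality $\prod_{i=1}^r (q_i^k - 1) < \prod_{i=1}^r q_i^k - 1$ for $r > 1$. The only difference is that you supply an inductive proof of that inequality (which checks out, since each $q_i^k \geq 2$), whereas the paper simply asserts it.
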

\begin{proof}
By Corollary~\ref{corollary-max-period-finite-field}, the maximum can be achieved when $\mathcal{R}$ is a field so the reverse implication holds. The forward implication follows from the preceding lemma by observing that if $r > 1$, then 
\[ \per(\a) \leq \prod_{i=1}^r (q_i^k - 1) <  \left(\prod_{i=1}^r q_i^k  \right) - 1 = |\mathcal{R}|^k - 1 \]
for all sequences $\a$ defined by a linear recurrence of degree~$k$ over $\mathcal{R}$ .
\end{proof}

We have investigated the maximum periods for some families of rings that are not fields.
One particularly simple family are those of the form $\mathcal{A}_n = \mathbb{F}_p[t]/\langle t^n - 1 \rangle$ for $n > 1$. These are also called {\em cyclic group algebras} since the elements of $\mathcal{A}_n$ can be identified with formal linear combinations of the elements of a cyclic group of order $n$ with coefficients in $\mathbb{F}_p$. The multiplication operation can then be viewed as arising by extending the group multiplication to such linear expressions in a natural way.

The factorization $t^n - 1 = (t - 1)(t^{n-1} + \ldots + t + 1)$ immediately implies that $\mathcal{A}_n$ is not a field for $n > 1$ and the maximum period for a linear recurrence sequence  of degree $k$ over $\mathcal{A}_n$ must be strictly smaller than $|\mathcal{A}_n|^k - 1 = p^{nk} - 1$. (Note that the latter assertion holds even when $f(t)$ has repeated irreducible factors which occurs when $p$ divides $n$.) Computer experiments show that there is a lot of variation in the maximum period. This is not surprising since it  depends on the structure of $\mathcal{A}_n$ which in turn depends on the factorization of $t^n - 1$ into irreducibles over $\mathbb{F}_p$ as $p$ varies.
In general, it seems that the larger the number of factors of $t^n - 1$, the smaller the maximum period, although we have not formulated a precise statement regarding this relationship. 

We conclude with one general observation about the family $\{ \mathcal{A}_n \}_{n=1}^\infty$. It is well known that if $n = \ell$ is a prime integer, then the cyclotomic polynomial $\Phi_\ell(t) = t^{\ell-1} + \ldots + t + 1$ is irreducible over $\mathbb{Q}$. It follows from the Frobenius Density Theorem (see~\cite[pg 32]{SL} for more details on the latter), that $\Phi_\ell(t)$  remains irreducible when reduced modulo $p$ for infinitely many primes $p$. For such $p$, we have $\mathcal{A}_\ell \cong \mathbb{F}_p \oplus \mathbb{F}_{p^{\ell -1}}$ and so the maximum period of linear recurrence sequences of degree $k$ over $\mathcal{A}_\ell$ is at least $p^{(\ell -1)k} - 1 \approx \frac{1}{p^k}(|\mathcal{A}_\ell|^k - 1)$ in this case. This follows since there are sequences defined over the field $ \mathbb{F}_{p^{\ell -1}}$ in the second component with period $p^{(\ell -1)k} - 1$ by Corollary~\ref{corollary-max-period-finite-field}.

\end{document}